\definecolor{thelinkcolor}{RGB}{0,0,150}
\crefname{equation}{}{Equation}
\crefname{theorem}{Theorem}{Theorem}
\crefname{example}{example}{Example}
\crefname{lemma}{Lemma}{Lemma}
\crefname{proposition}{Proposition}{Proposition}
\crefname{figure}{figure}{Figure}
\crefname{table}{table}{Table}
\newcommand{\R}{\mathbb{R}}				
\renewcommand{\vec}[1]{\mathbf{#1}} 	
\newcommand{\Rey}{\mbox{\it Re}} 		
\newcommand{\dt}{\,{\rm d}t} 			
\newcommand{\ssep}{\,;}
\newcommand{\abs}[1]{\left\vert #1 \right\vert}
\definecolor{matlabblue}{rgb}{0,0,1}
\definecolor{matlabred}{rgb}{1,0,0}
\definecolor{matlabgreen}{rgb}{0,0.5,0}
\definecolor{matlaborange}{rgb}{0.8706,0.4902,0}
\definecolor{matlabmagenta}{rgb}{0.7490,0,0.7490}
\definecolor{matlabgrey}{rgb}{0.65,0.65,0.65}
\newcommand\solidrule[1][10pt]{\rule[0.5ex]{#1}{1.5pt}}
\newcommand{\mysquare}[1]{%
	\protect\begin{tikzpicture}%
	\protect\draw[thick,color=#1] (0,0) -- (0.75ex,0) -- (0.75ex,0.75ex) -- (0,0.75ex) -- (0,0);
	\protect\end{tikzpicture}%
}
\newcommand{\mytriangle}[1]{%
	\protect\begin{tikzpicture}%
	\protect\draw[thick,color=#1] (0,0) -- (1ex,0) -- (0.5ex,0.866ex) -- (0,0);
	\protect\end{tikzpicture}%
}
\newcommand{\myudtriangle}[1]{%
	\protect\begin{tikzpicture}%
	\protect\draw[thick,color=#1] (0,0.866ex) -- (1ex,0.866ex) -- (0.5ex,0ex) -- (0,0.866ex);
	\protect\end{tikzpicture}%
}
\newcommand{\mycross}[1]{%
	\protect\begin{tikzpicture}%
	\protect\draw[thick,color=#1] (0,0) -- (1ex,1ex);
	\protect\draw[thick,color=#1] (0,1ex) -- (1ex,0);
	\protect\end{tikzpicture}%
}
\newcommand{\mycircle}[1]{%
	\protect\begin{tikzpicture}%
	\protect\draw[thick,color=#1] (0.5ex,0.5ex) circle (0.5ex);
	\protect\end{tikzpicture}%
}
\newcommand{\subalign}[1]{%
	\vcenter{%
		\Let@ \restore@math@cr \default@tag
		\baselineskip\fontdimen10 \scriptfont\tw@
		\advance\baselineskip\fontdimen12 \scriptfont\tw@
		\lineskip\thr@@\fontdimen8 \scriptfont\thr@@
		\lineskiplimit\lineskip
		\ialign{\hfil$\m@th\scriptstyle##$&$\m@th\scriptstyle{}##$\crcr
			#1\crcr
		}%
	}
}
\setlist[enumerate]{leftmargin=.5in}
\setlist[itemize]{leftmargin=.5in}
\newtheorem{theorem}{Theorem}
\theoremstyle{definition}
\author[1]{Mayur V. Lakshmi\thanks{Email address for correspondence: \href{mailto:mayur.lakshmi11@imperial.ac.uk}{mayur.lakshmi11@imperial.ac.uk}}}
\author[1]{Giovanni Fantuzzi}
\author[2]{Jes\'{u}s D. Fern\'{a}ndez-Caballero}
\author[1]{Yongyun Hwang}
\author[1]{Sergei I. Chernyshenko}
\affil[1]{Department of Aeronautics, Imperial College London, London, SW7 2AZ, United Kingdom.}
\affil[2]{Mathematics Institute, Zeeman Building, University of Warwick, Coventry, CV4 7AL, UK}
\title{Finding extremal periodic orbits with polynomial optimization, with application to a nine-mode model of shear flow}
\begin{document}
\maketitle

\vspace*{-5ex}
\begin{abstract}\noindent
	Tobasco \textit{et al.} [\href{https://doi.org/10.1016/j.physleta.2017.12.023}{\textit{Phys. Lett. A}, 382:382--386, 2018}] recently suggested that trajectories of ODE systems \textcolor{black}{that} optimize the infinite-ti5e average of a certain observable can be localized using sublevel sets of a function that arise when bounding such averages using so-called auxiliary functions. In this paper we demonstrate that this idea is viable and allows for the computation of extremal unstable periodic orbits (UPOs) for polynomial ODE systems. First, we prove that polynomial optimization is guaranteed to produce auxiliary functions that yield near-sharp bounds on time averages, which is required in order to localize the extremal orbit accurately. Second, we show that points inside the relevant sublevel sets can be computed efficiently through direct nonlinear optimization. Such points provide good initial conditions for UPO computations. We then combine these methods with a single-shooting Newton--Raphson algorithm to study extremal UPOs for a nine-dimensional model of sinusoidally forced shear flow. We discover three previously unknown families of UPOs, one of which simultaneously minimizes the mean energy dissipation rate and maximizes the mean perturbation energy relative to the laminar state for Reynolds numbers approximately between 81.24 and 125.
\end{abstract}

\paragraph*{Keywords.}
Polynomial optimization, periodic orbits, ergodic optimization

\paragraph*{AMS subject classifications.}
37C27, 37E99, 76F20, 65P99

\section{Introduction}
\label{s:intro}
In the study of dynamical systems governed by nonlinear differential equations, the problem of finding unstable periodic orbits (UPOs) is of significant interest. The set of UPOs forms a skeleton for the chaotic dynamics of many nonlinear dissipative systems~\cite{Cvitanovic1991a} and is sometimes dense in the chaotic attractor~\cite{Eckmann1985}. Therefore, the identification of UPOs is useful to understand the topology of the attractor. In addition, knowledge of UPOs and their respective stability eigenvalues can be leveraged to calculate the infinite-time average of any observable of interest over chaotic trajectories through a cycle expansion of the corresponding dynamical zeta function~\cite{Artuso1990,Cvitanovic1995}. A detailed treatment of cycle expansions can be found in~\cite{Cvitanovic2017}. Finally, UPOs also play a key role in the control of chaos~\cite{Ott1990}.

One of the main challenges in the computation of UPOs is that the convergence of the Newton--Raphson algorithm applied to the flow map~\cite[Ch. 7]{Cvitanovic2017},~\cite{Kelley2003} relies on the availability of good initial guesses for both the UPO (or at least one point on it) and its period. A more sophisticated algorithm for computing UPOs was derived by Viswanath \cite{Viswanath2001}, and is based upon the Lindstedt--Poincar\'{e} technique. However, sufficiently good initial guesses for the UPO and its period are also required for this algorithm to ensure convergence.

One popular method to search for such initial guesses was introduced by Auerbach \textit{et al.}~\cite{Auerbach1987}, who suggested to look for near periodic sections of simulated chaotic trajectories, which signal shadowing of a UPO. Another approach exists for systems with at least two local attractors; see, for instance,~\cite{Skufca2006,Kim2008,Joglekar2015}. The idea behind this approach is that initial conditions on the boundary of the local attraction basins converge to neither attractor, but rather to one of possibly many unstable ``edge states", whose stable manifolds are embedded in the basin boundary. Using a simple bisection algorithm, one can obtain a point lying very close to the basin boundary, such that its forward trajectory approaches very closely an edge state---very often, a UPO. The aforementioned methods have proved effective in practice and are applicable to very-high-dimensional systems. However, there is little control on which UPOs can be identified.

In this work, we consider the problem of computing UPOs that are \textit{extremal}, in the sense that they maximize or minimize the infinite-time average of a certain observable. Such orbits are especially interesting for control purposes, since their knowledge allows one to design control strategies to stabilize desired dynamics or suppress undesired ones. Observe also that every periodic orbit is extremal for at least one observable, such as its own indicator function (equal to one on the periodic orbit and zero elsewhere). Thus, studying extremal UPOs is not restrictive. Of course, indicator functions can not be used in this context because their calculation (or approximation) requires a priori knowledge of the corresponding UPO, which is precisely what one wishes to compute. Nevertheless, varying the observable whose time average is to be optimized still allows one to systematically probe the state space and potentially identify a large number of periodic orbits.

The identification of extremal UPOs is inherently linked to the study of infinite-time averages of scalar-valued observables, as these are thought to be typically maximized (or minimized) on periodic orbits~\cite{Yang2000}. For systems governed by ordinary differential equations (ODEs), extremal time averages can be bounded rigorously to arbitrary accuracy using so-called auxiliary functions of the system's state, which are similar to but distinct from the Lyapunov functions used in nonlinear stability analysis. This strategy, proposed by Chernyshenko \textit{et al.}~\cite{Chernyshenko2014} and further considered in~\cite{Fantuzzi2016,Goluskin2018a,Goluskin2019}, generalizes the ``background method'' developed by Doering \& Constantin in fluid mechanics~\cite{Doering1992,Doering1994,Constantin1995,Doering1996}, which amounts to using a quadratic auxiliary function~\cite{Chernyshenko2017}. A dual formulation of the same approach, in the sense of convex duality, also exists (see, e.g.~\cite[Section 6]{Korda2018a} and~\cite{Tobasco2018}) and is based on the well-known connection between infinite-time averages and invariant measures~\cite[Lectures 1 and 6]{Sinai1977}.

Auxiliary functions are attractive for two reasons. First, the optimization of auxiliary functions is a convex problem, while directly optimizing time averages over trajectories is a nonconvex and nonlinear problem. Second, for ODE systems with polynomial dynamics, polynomial auxiliary functions can be constructed and optimized numerically using methods for polynomial optimization. Indeed, the constraint to be satisfied by any candidate auxiliary function becomes a non-negativity condition on a polynomial. This is an NP-hard constraint in general~\cite{Murty1987} but can be strengthened into the condition that the polynomial admits a sum-of-squares (SOS) decomposition. Problems with SOS constraints can be posed as semidefinite programs (SDPs) and can be solved computationally using algorithms with polynomial-time complexity~\cite{Vandenberghe1996}. While SOS constraints are only sufficient and not necessary for non-negativity (except for certain particular classes of polynomials; see, e.g.~\cite{Reznick2000}), experience has shown that they work extremely well in practice. Further, in certain cases powerful results on the existence of SOS representations for polynomials (e.g.~\cite{Putinar1993}) guarantee that replacing non-negativity with SOS constraints is not restrictive. For these reasons, many authors have used auxiliary functions, generalizations thereof, and their measure-theoretic counterparts (e.g. the aforementioned invariant measures) to study dynamical systems. Applications beyond time averages include nonlinear stability analysis~\cite{Papachristodoulou2005,Goulart2012,Chernyshenko2014,Huang2015}, estimating regions of attraction~\cite{Henrion2014,Tan2006a}, bounding extreme values on attractors \cite{Goluskin2018}, nonlinear control~\cite{Lasagna2016,Lasserre2008,Majumdar2014}, ergodic optimization of discrete-time dynamical systems and ODEs~\cite{Henrion2012,Korda2018a}, and bounding stationary expectations for discrete-time and continuous-time stochastic processes~\cite{Henrion2012,Korda2018a,Kuntz2016,Fantuzzi2016}.

The connection between auxiliary functions proving bounds on infinite-time averages and the corresponding extremal trajectories, which are often UPOs, has been recently established by Tobasco \textit{et al.}~\cite{Tobasco2018}. Precisely, if an auxiliary function produces nearly sharp bounds, then it can be used to construct a function whose sublevel sets localize extremal and near-extremal trajectories in state space. Here we leverage this observation and propose a new approach to compute extremal UPOs for ODE systems with polynomial dynamics. The first step in this approach is to construct near-optimal auxiliary functions using SOS optimization. We prove that this is always possible with a small modification to the computational methods described in the previous works~\cite{Chernyshenko2014,Fantuzzi2016,Goluskin2018a}. This follows from the argument in~\cite{Tobasco2018} by combining polynomial approximations of continuously differentiable functions on compact sets with a standard argument in SOS optimization. Our result is the dual version of Theorem 2 in~\cite{Korda2018a} in the context of bounding infinite-time averages, although the latter result applies to a broader class of optimization problems over invariant measures. The second step is to compute points in the $\varepsilon$-sublevel set of the function constructed using the near-optimal auxiliary function for small $\varepsilon$, and use these as initial conditions for converging to UPOs with existing algorithms. We demonstrate that while full sublevel sets of multivariate polynomials in high-dimensional spaces cannot be computed in practice, good initial conditions can be obtained with relatively inexpensive nonlinear optimization methods such as quasi-Newton methods.

As a proof of concept, we apply our new method to a nine-dimensional model of sinusoidally forced shear flow~\cite{Moehlis2004}. We consider this particular system for two reasons. First, it is sufficiently low dimensional for the aforementioned SOS optimization approach to be computationally feasible, but it is also sufficiently high dimensional that approximating extremal UPOs by computing full sublevel sets of polynomials as done in~\cite{Tobasco2018} is not possible. Second, lower bounds on the mean energy dissipation rate in the model were computed by optimizing polynomial auxiliary functions \cite{Chernyshenko2014}. For values of the Reynolds number $\Rey$ larger than 40, these bounds appear to converge to a value strictly smaller than both the dissipation rate of the laminar flow state and the simulated dissipation rate of the turbulent flow state as the degree of the polynomial auxiliary function is raised. It was hypothesized in \cite{Chernyshenko2014} that this behavior is due to the existence of some periodic orbit which saturates the bounds. While a relatively large number of families of UPOs have already been found for this flow model~\cite{Moehlis2005}, there remains the possibility that other UPOs have thus far eluded researchers.

Here we investigate whether optimizing polynomial auxiliary functions of higher degree improves the bounds reported in \cite{Chernyshenko2014}, and whether for $\Rey > 40$ the mean energy dissipation rate is indeed minimized by UPOs. Using the methods described above, we obtain approximations to the UPOs that minimize the infinite-time average of the energy dissipation rate, as well as to those maximizing the infinite-time average of the energy of perturbations from the laminar flow state. We use our approximations to find three new families of UPOs which do not appear to be among those reported in~\cite{Moehlis2005}.

The rest of this work is structured as follows. \Cref{s:optimal-bounds} reviews the auxiliary function method to bound extremal infinite-time averages. In \cref{s:po-approximation} we describe our approach to localizing UPOs using auxiliary functions. \Cref{s:sos-bounds} explains how to construct near-optimal auxiliary functions using SOS optimization and includes theoretical results on the convergence of the numerical bounds to the extremal time average, as well as on the exploitation of symmetries in computations. Readers interested in computing UPOs may skip this section at first reading. \Cref{s:shear-flow-results} reports the results obtained when applying our methods to the nine-mode shear flow model from~\cite{Moehlis2004}. Finally, \cref{s:conclusion} suggests possible future research directions and offers concluding remarks.

\section{Infinite-time averages and auxiliary functions}
\label{s:optimal-bounds}
Bounding infinite-time averages using auxiliary functions is the first step in our approach to computing extremal UPOs.
Consider an autonomous dynamical system governed by the ODE
\begin{equation}
\label{e:ode}
\frac{\mathrm{d}\vec{a}}{\dt}  = \vec{f}(\vec{a}), \quad \vec{a}(0) =\vec{a}_0,
\end{equation}
where $\vec{a} \in \mathbb{R}^n$. Following~\cite{Tobasco2018}, we assume that trajectories exist at all times and are continuously differentiable with respect to their initial conditions. This is true, for example, when $\vec{f} : \mathbb{R}^n \mapsto \mathbb{R}^n$ is continuously differentiable.  We also assume that there exists a compact set $\Omega$ in which all solutions $\vec{a} (t \ssep \vec{a}_0 )$ of~\cref{e:ode} eventually remain, irrespective of the initial conditions $\vec{a}_0$. Such a set may be found in a variety of ways, but one very common approach is to let $\Omega = \{\vec{a} \in \R^n \mid\, W(\vec{a})\leq C\}$, where $W:\R^n \to \R$ is a radially unbounded and continuously differentiable function such that
\begin{equation}
\label{e:absorbing-set-conditions}
\lambda\,  \vec{f}(\vec{a}) \cdot \nabla W(\vec{a}) \leq C - W(\vec{a}) \qquad \forall \vec{a}\in\R^n,
\end{equation}
for some scalars $C$ and $\lambda>0$. While these conditions produce a globally absorbing set (this follows from Gronwall's inequality; see, e.g.~\cite{Goluskin2018,Goluskin2019}), we need not assume that $\Omega$ is absorbing; trajectories may transiently leave $\Omega$, provided that they eventually re-enter and remain in it. In particular, any closed ball containing a globally absorbing set is a suitable choice for $\Omega$.

Given a continuous function $\Phi : \mathbb{R}^n \to \mathbb{R}$, which represents an observable of interest, we define its infinite-time average along the trajectory starting from $\vec{a}_0$ as
\begin{equation}
\label{e:phi-time-average-definition}
\overline{\Phi} \left(\vec{a}_0\right) := \limsup_{T \to \infty} \frac{1}{T} \int_0^T \Phi \left[ \vec{a}\left(t \ssep \vec{a}_0\right) \right] dt.
\end{equation}
We use the lim sup in this definition because time averages need not converge. As already noted in~\cite{Tobasco2018}, $\overline{\Phi}(\vec{a}_0)$ could alternatively be defined using the lim inf, with no effect on the results presented in this work. Conditions under which time averages do converge are discussed in~\cite{Karabacak2011}.

We are interested in the maximal value of $\overline{\Phi}$ over all trajectories,
\begin{equation}
\label{e:max-phi-definition}
\overline{\Phi}^* := \max _{\vec{a}_0 \in \Omega} \overline{\Phi}(\vec{a}_0),
\end{equation}
as well as the initial conditions and corresponding trajectories which achieve it. The optimization problem on the right-hand side of \cref{e:max-phi-definition} is well posed and there exists an optimal initial condition $\vec{a}_0^*$ such that $\overline{\Phi}(\vec{a}_0^*) = \overline{\Phi}^*$ (see, e.g.~\cite{Tobasco2018}); in fact, there are clearly infinitely many such optimal initial conditions because $\overline{\Phi}[\vec{a}(t\ssep \vec{a}_0^*)] = \overline{\Phi}(\vec{a}_0^*)$ for any fixed time $t$. Observe that considering maximal time averages only is not restrictive because the minimal value of $\overline{\Phi}$ coincides with the maximum of $\overline{-\Phi}$. Additionally, there is no loss of generality in assuming that the initial condition belongs to $\Omega$ because every trajectory enters and remains in it after some finite time, and the part of the trajectory before this time gives no contribution to $\overline{\Phi}(\vec{a}_0)$.

Upper bounds on $\overline{\Phi}^*$ can be proven using an approach originally proposed in~\cite{Chernyshenko2014} and further studied in~\cite{Fantuzzi2016,Goluskin2018a,Goluskin2019,Lasagna2016}. The method relies on the simple observation that, for all initial conditions $\vec{a}_0$ in $\Omega$ and any function $V: \Omega \to \mathbb{R}$ in the class $C^1(\Omega)$ of continuously differentiable functions on $\Omega$ (hereafter called \textit{auxiliary function}),
\begin{equation}
\overline{\vec{f}[\vec{a}(t)]\cdot \nabla V[\vec{a}(t)]}
= \overline{\frac{\mathrm{d}}{\dt} V[\vec{a}(t)]}
= \lim_{T \to \infty} \frac{ V[\vec{a}(T)] - V(\vec{a}_0)}{T}
= 0.
\end{equation}
This immediately implies $\overline{\Phi}(\vec{a}_0) = \overline{\Phi + \vec{f} \cdot \nabla V}(\vec{a}_0)$, i.e., the functions $\Phi$ and $\Phi + \vec{f}\cdot \nabla V$ have the same infinite-time average. Moreover, since trajectories eventually remain in the compact set $\Omega$ we have the pointwise bound
\begin{equation}
\overline{\Phi + \vec{f}\cdot \nabla V}(\vec{a}_0) \leq  
\max_{\vec{a} \in \Omega} \left\{ \Phi(\vec{a}) + \vec{f}(\vec{a})\cdot \nabla V(\vec{a}) \right\}
\end{equation} 
irrespective of the initial condition $\vec{a}_0$. Thus,
\begin{equation}
\overline{\Phi}(\vec{a}_0) \leq \max_{\vec{a} \in \Omega} \left\{ \Phi(\vec{a}) + \vec{f}(\vec{a})\cdot \nabla V(\vec{a}) \right\}.
\end{equation}
Upon minimizing the right-hand side of this inequality over all auxiliary functions $V$ in $C^1(\Omega)$, and subsequently maximizing the left-hand side over $\vec{a}_0$, we obtain an upper bound on $\overline{\Phi}^*$,
\begin{equation}
\overline{\Phi}^* \leq \adjustlimits \inf_{V \in C^1(\Omega)} \max_{\vec{a} \in \Omega} 
\left\{ \Phi(\vec{a}) + \vec{f}(\vec{a})\cdot \nabla V(\vec{a}) \right\}.
\end{equation}
In fact, Tobasco \textit{et al.}~\cite{Tobasco2018} recently proved that, under all assumptions on the trajectories of \cref{e:ode} and on $\Omega$ outlined above, this inequality is actually an \textit{equality}. In other words, auxiliary functions characterize extremal infinite-time averages exactly:
\begin{equation}
\label{e:inf-sup-V}
\overline{\Phi}^* = \adjustlimits \inf_{V \in C^1(\Omega)} \max_{\vec{a} \in \Omega} 
\left\{ \Phi(\vec{a}) + \vec{f}(\vec{a})\cdot \nabla V(\vec{a}) \right\}.
\end{equation}


The key feature of the minimax problems in~\cref{e:inf-sup-V} is that optimizing auxiliary functions to obtain good upper bounds on $\overline{\Phi}^*$ does \textit{not} require solving the ODE~\cref{e:ode}, but rather estimating the global maximum of $\Phi + \vec{f}\cdot \nabla V$ over $\Omega$. For highly nonlinear or chaotic systems, therefore, finding nearly sharp bounds may be easier than a direct calculation of $\overline{\Phi}^*$ via extensive numerical simulations. In addition, while optimizing fully general $V$ is not easy, very good bounds can be obtained in practice by considering subsets of auxiliary functions that can be optimized numerically. This has already been clearly demonstrated by the results in~\cite{Goluskin2018a,Goluskin2019}, where nearly optimal polynomial auxiliary functions for the Lorenz system and the Kuramoto--Sivashinsky equation were constructed numerically using SOS optimization. In \cref{s:sos-bounds} we show that if $\vec{f}$ and $\Phi$ are polynomials and $\Omega$ is a compact semialgebraic set subject to a mild technical condition, then arbitrarily sharp bounds on $\overline{\Phi}^*$ and the corresponding near-optimal auxiliary functions can, at least in principle, always be constructed numerically using a variation of the computational approach utilized in~\cite{Chernyshenko2014,Fantuzzi2016,Goluskin2018a,Goluskin2019}.

\section{Approximation of extremal trajectories}
\label{s:po-approximation}
In addition to yielding arbitrarily sharp bounds on $\overline{\Phi}^*$, auxiliary functions can be used to localize the associated extremal trajectories in state space. To see this, suppose that a fully optimal auxiliary function $V^*$, which yields a bound $\lambda$ exactly equal to $\overline{\Phi}^*$, exists and is available. Then, the extremal trajectory $\vec{a}(t)$ must satisfy~\cite{Fantuzzi2016,Tobasco2018}
\begin{equation}
\overline{\lambda - \left( \vec{f}[\vec{a}(t)] \cdot \nabla V^*[\vec{a}(t)] + \Phi[\vec{a}(t)] \right)}= 0
\end{equation}
with $\lambda = \overline{\Phi}^*$. Since the quantity being averaged is nonnegative, if the extremal trajectory is periodic it must lie inside the set
\begin{equation}
\mathcal{S}_0 := \{ \vec{a} \in \Omega : \lambda - \left[ \vec{f}(\vec{a}) \cdot \nabla V^*(\vec{a}) + \Phi(\vec{a}) \right] = 0 \}.
\end{equation}
While not all points in $\mathcal{S}_0$ necessarily belong to the extremal trajectory, they provide guidance to locate it.

Unfortunately, an optimal auxiliary function may not exist because the infimum in~\cref{e:inf-sup-V} need not be attained. When it does exist, moreover, it may not be computable. Nevertheless, equation~\cref{e:inf-sup-V} implies that for any $\delta > 0$ there exists a $\delta$-suboptimal auxiliary function. More precisely, for all $\delta>0$ there exists a $V \in C^1(\Omega)$ which provides a bound $\lambda$ with $\overline{\Phi}^* \leq \lambda \leq \overline{\Phi}^* + \ \delta$. Furthermore, we show in the next section that the equality \cref{e:inf-sup-V} still holds if one optimizes instead over the space $\Pi_n$ of $n$-variate polynomials (see \cref{e:inf-sup-V-poly}). The implication is that for all $\delta>0$, there exists a $\delta$-suboptimal polynomial auxiliary function. We also show in the next section that such $V$ can always be computed numerically, although doing so for high-dimensional systems might require prohibitively large computational resources. As described in~\cite{Tobasco2018}, any suboptimal $V$ constructed numerically can also be used to approximately localize the extremal trajectory, although the results are necessarily weaker compared to the case in which $V$ is optimal. Specifically, for any pair $(\lambda,V)$, where $\lambda$ is a $\delta$-suboptimal bound on $\overline{\Phi}^*$ and $V$ is the corresponding $\delta$-suboptimal polynomial auxiliary function, let $\mathcal{P}_{\lambda,V}$ denote the nonnegative polynomial
\begin{equation}
\label{e:polynomial}
\mathcal{P}_{\lambda,V}(\vec{a}) := \lambda - \left[ \vec{f}(\vec{a}) \cdot \nabla V(\vec{a}) + \Phi(\vec{a}) \right]
\end{equation}
and consider the set of all points where $\mathcal{P}_{\lambda,V}$ is no greater than some arbitrary $\varepsilon>0$:
\begin{equation}
\label{e:tobasco-set}
\mathcal{S}_{\varepsilon} = \{ \vec{a} \in \Omega : \mathcal{P}_{\lambda,V} \leq \varepsilon \}.
\end{equation}
For any $\delta$-suboptimal $V$, the extremal trajectory is guaranteed to lie in $\mathcal{S}_{\varepsilon}$ for a fraction of time determined by the values of $\varepsilon$ and $\delta$. In particular, if the extremal trajectory is a periodic orbit, the fraction of its time period spent inside the set $\mathcal{S}_{\varepsilon}$ is no smaller than $F:=1- \delta/\varepsilon$~\cite[Section 3]{Tobasco2018}. Since $\mathcal{P}_{\lambda,V}(\vec{a}) \geq 0$ on the compact set $\Omega$, the volume of $\mathcal{S}_{\varepsilon}$ is small for sufficiently small $\varepsilon$ if $\mathcal{P}_{\lambda,V}$ is not a constant polynomial.\footnote{Let ${\mathcal{L}^n}(A)$ denote the $n$-dimensional Lebesgue measure (i.e. volume) of a set $A \subset \R^n$ and fix any $\delta > 0$ arbitrarily small. Observe that ${\mathcal{L}^n}(\mathcal{S}_0)=0$ because $\mathcal{S}_0$ is the zero level set of a nonconstant polynomial. Further, ${\mathcal{L}^n}(\mathcal{S}_\varepsilon)$ is finite for all $\varepsilon$ because $\mathcal{S}_{\varepsilon}$ is a closed subset of the compact set $\Omega$, so it is itself compact. For any sequence $\{\varepsilon_k\}_{k \in \mathbb{N}}$ converging to zero we have $\bigcap \mathcal{S}_{\varepsilon_k} = \mathcal{S}_0$. The continuity of the Lebesgue measure gives ${\mathcal{L}^n}(\mathcal{S}_{\varepsilon_k}) \to {\mathcal{L}^n}(\mathcal{S}_0) = 0$, so there exists $\varepsilon_k$ such that ${\mathcal{L}^n}(\mathcal{S}_{\varepsilon_k}) \leq \delta$.} When $\delta \ll 1$, we can have $F$ close to 1 for $\varepsilon$ not too large, so $\mathcal{S}_{\varepsilon}$ can be expected to approximate the location of extremal and near-extremal trajectories. This has already been demonstrated in~\cite{Tobasco2018} for the Lorenz system~\cite{Lorenz1963}.

Even when a polynomial auxiliary function $V$ producing a near-optimal bound $\lambda$ on $\overline{\Phi}^*$ is available, however, approximating extremal trajectories using the full set $\mathcal{S}_\varepsilon$ for a given $\varepsilon$ is computationally intractable except for very-low-dimensional ODE systems. When simply ``gridding'' the state space and evaluating $\mathcal{P}_{\lambda,V}$ at the grid points to approximate $\mathcal{S}_\varepsilon$ is not viable, a simpler strategy to compute points in $\mathcal{S}_\varepsilon$ is to numerically look for points where $\mathcal{P}_{\lambda,V}(\vec{a})\leq \varepsilon$ using any nonlinear minimization algorithm, initialized using random initial conditions $\vec{a}_0 \in \Omega$. {Since $\mathcal{P}_{\lambda,V}$ is non-convex, for each choice of $\vec{a}_0$ this minimization typically returns a local minimizer $\vec{a}^*$. If $\mathcal{P}_{\lambda,V}(\vec{a}^*) \leq \varepsilon$, then $\vec{a}^*$ belongs to $\mathcal{S}_\varepsilon$. Otherwise, the minimization should be repeated starting from a different random initial condition.}

{The initial condition for the nonlinear minimization algorithm can be sampled from any distribution. For simplicity, in this work we use the uniform distribution, but better strategies are possible. One possible computationally efficient approach is to first choose a random $\vec{a}_0 \in \Omega$ from any given probability distribution, and then evaluate $e^{-\beta\mathcal{P}_{\lambda,V}(\vec{a_0})}$ for some parameter $\beta > 0$. This expression takes on a value close to 1 only if $\mathcal{P}_{\lambda,V}\vec{a}_0 \approx 0$, meaning that $\vec{a}_0$ is a near minimizer for $\mathcal{P}_{\lambda,V}$ and, therefore, is likely to be a good initial conditions from which to start the nonlinear minimization algorithm.} 

{Another} interesting observation is that, when $\varepsilon$ is small, all points in $\mathcal{S}_\varepsilon$ along the extremal trajectory are {almost minimizers} for $\mathcal{P}_{\lambda,V}$. Thus, the polynomial $\mathcal{P}_{\lambda,V}$ will be relatively ``flat'' along the part of the extremal trajectory contained in $\mathcal{S}_\varepsilon$, and steeper elsewhere. It is not unreasonable to expect that the minimization routine used would quickly descend to this flat region and then slowly progress towards {a minimizer} for $\mathcal{P}_{\lambda,V}$. In the process, one can obtain a sequence of points for which $\mathcal{P}_{\lambda,V}(\vec{a})\leq \varepsilon$ and that, crucially, ``shadow'' part of the extremal trajectory. {If $V$ is near-optimal, meaning that a large portion of the extremal trajectory lies in the set $\mathcal{S}_\varepsilon$, tracking this sequence of points can produce a richer approximation of the extremal trajectory.} In light of this, one should choose a minimization algorithm which is unlikely to stall, but nevertheless converges slowly to local minima in order to produce many points in $\mathcal{S}_{\varepsilon}$. Typical algorithms which fit these requirements are variants of the quasi-Newton method.

An important complication is that only a finite number of local minima of $\mathcal{P}_{\lambda,V}$ (which lie in the set $\mathcal{S}_{\varepsilon}$) may exist when $V$ is suboptimal. Thus, in order to obtain a larger collection of points in $\mathcal{S}_{\varepsilon}$, one should not fully minimize $\mathcal{P}_{\lambda,V}$. We propose two simple strategies to avoid this issue and produce a better approximation to $\mathcal{S}_{\varepsilon}$. The first is to stop the minimization routine prematurely by relaxing the tolerances. However, this prevents the computation of points where the functions used to define the stopping criteria take on values much smaller than their respective prescribed tolerances. Such points also lie in $\mathcal{S}_{\varepsilon}$. Therefore, one should repeat the computation for progressively tighter tolerance values. The second approach is to use tight tolerances and store the sequence of points generated by the minimization procedure as it progresses. {As explained above, many of these points are expected to lie in the set $\mathcal{S}_{\varepsilon}$, which can lead to larger sections of the extremal trajectory being approximated.} The two strategies are not exclusive, and the best results may be obtained when combining them. Whichever method is used, the process can be repeated by initializing the minimization algorithm from different random initial conditions $\vec{a}_0 \in \Omega$, in order to generate a large collection of points in $\mathcal{S}_{\varepsilon}$. Observe that this process is amenable to a large degree of parallelization, and can be scaled to high-dimensional systems by choosing inexpensive minimization routines such as the BFGS quasi-Newton algorithm~\cite{Broyden1970,Fletcher1970,Goldfarb1970,Shanno1970}. Our first approach was used in conjunction with the BFGS quasi-Newton algorithm to obtain the results presented in \cref{s:upos-shear-flow}. The MATLAB built-in function \texttt{fminunc} was used to implement the minimization, with the step tolerance on $\vec{a}$ (relative lower bound on the size of a step) set to $10^{-6}$ for all sets of results, and the first-order optimality tolerance (lower bound on $\abs{\abs{\nabla \mathcal{P}_{\lambda,V} }}_{\infty}$) set to $10^{-6}$ for all results except for those presented in figure \ref{fig:PO_Approx250}, where it was set to $5 \times 10^{-7}$.

Any points computed using this heuristic procedure provide educated initial guesses for algorithms that converge to UPOs by evolving the system's dynamics forward in time. However, it should be stressed that there are no theoretical guarantees that our heuristic procedure will produce sufficiently accurate approximations to the extremal orbit. First, it is known that there can exist points in $\mathcal{S}_\varepsilon$ that are not close to the extremal or near-extremal trajectories~\cite[Section 4]{Tobasco2018}. Second, the analysis in~\cite{Tobasco2018} does not guarantee that the entire extremal trajectory is in $\mathcal{S}_{\varepsilon}$, but only that it spends a large fraction of a finite period of time in $\mathcal{S}_{\varepsilon}$ when $V$ is close to optimal. Therefore, one may not be able to approximate the full extremal trajectory in practice. Nevertheless, the results presented in~\cite{Tobasco2018} and those in \cref{s:upos-shear-flow} demonstrate that very good approximations of the extremal orbits are often obtained in practice when $V$ is very close to optimal.

\section{Numerical optimization of auxiliary functions}
\label{s:sos-bounds}
The approach to approximating extremal UPOs described in the previous section relies on the availability of a near-optimal auxiliary function. The following subsections describe how suitable polynomial $V$ and $\lambda$ can be constructed computationally using SOS optimization when $\Phi$ and (the entries of) $\vec{f}$ are polynomials. Readers who are primarily interested in the application of our methods can safely skip this section at first reading and proceed to \cref{s:shear-flow-results} for numerical results.

\subsection{Optimizing auxiliary functions with SOS optimization}

In order to compute near-optimal auxiliary functions numerically, we begin by observing that, since $\Omega$ is compact by assumption, polynomials are dense in $C^1(\Omega)$~\cite[Ch. 1, Theorem 1.1.2]{Llavona1986}. Thus, one may restrict the search for auxiliary functions in~\cref{e:inf-sup-V} to the space $\Pi_n$ of $n$-variate polynomials:
\begin{equation}
\label{e:inf-sup-V-poly}
\overline{\Phi}^* = 
\adjustlimits \inf_{V \in \Pi_n} \max_{\vec{a} \in \Omega} \left\{ \Phi(\vec{a}) + \vec{f}(\vec{a})\cdot \nabla V(\vec{a}) \right\}.
\end{equation}
This seemingly small refinement of~\cref{e:inf-sup-V}, which is our first contribution, is actually crucial to prove that a near-optimal $V$ can be computed with the methods described in this section (see \cref{th:convergence-wsos} below). Of course, replacing $C^1$ functions with polynomials may prevent the infimum over $V$ in~\cref{e:inf-sup-V-poly} from being attained even when that in~\cref{e:inf-sup-V} is, but this will not be important for our purposes. Whether the infimum in~\cref{e:inf-sup-V} is attained and under which conditions an optimal polynomial $V$ exists are open theoretical questions that go beyond the scope of the present work.

The space of all $n$-variate polynomials is still infinite-dimensional, hence computationally intractable. To make progress, one can limit the search for $V$ to the set $\Pi_{n,d}$ of $n$-variate polynomials of degree $d$ or less, at the cost of replacing the equality in~\cref{e:inf-sup-V-poly} with an upper bound. For each integer $d$, the best upper bound on  $\overline{\Phi}^*$ available with $V \in \Pi_{n,d}$ is given by
\begin{align}
\label{e:inf-sup-V-poly-reformulated}
\overline{\Phi}^* 
&\leq \adjustlimits \inf_{V \in \Pi_{n,d}} \max_{\vec{a} \in \Omega} \left\{ \Phi(\vec{a}) + \vec{f}(\vec{a})\cdot \nabla V(\vec{a}) \right\} \\
&= \inf_{\subalign{V &\in \Pi_{n,d} \\ \lambda &\in \R}}
\left\{  \lambda \mid\, 
\lambda - \Phi(\vec{a}) - \vec{f}(\vec{a})\cdot \nabla V(\vec{a}) \geq 0 \text{ on }\Omega 
\right\}. \notag 
\end{align}

The right-hand side is a convex and finite-dimensional minimization problem with a polynomial inequality constraint. The optimization variables are $\lambda$ and, in the most general case, the $\binom{n+d}{d}$ coefficients of $V$. Numerical solution of this problem to global optimality is difficult because polynomial inequalities are NP--hard except for a few special cases, such as univariate or quadratic polynomials~\cite{Murty1987}. To reduce the computational complexity, a common strategy is to replace nonnegativity with the sufficient condition that $\lambda - \Phi(\vec{a}) - \vec{f}(\vec{a})\cdot \nabla V(\vec{a})$ is representable as a sum of squares of other polynomials of degree no larger than $r(d)/2$, where
\begin{align}
r(d) 
&= \deg\left( \lambda - \Phi - \vec{f}\cdot \nabla V \right) \\
&= \max\left\{ \deg(\Phi), \deg(\vec{f})+d-1 \right\}. \notag 
\end{align}
While not all nonnegative polynomials admit an SOS representation, its existence (or lack thereof) can be established in polynomial time by solving an SDP~\cite{Parrilo2003,Parrilo2012}. Additionally, a variety of open-source software packages are available to automatically reformulate optimization problems with SOS constraints as SDPs and solve them. Upon strengthening the nonnegativity constraint in~\cref{e:inf-sup-V-poly-reformulated} with an SOS constraint, one obtains the computable upper bound
\begin{equation}
\label{e:sos-bound}
\overline{\Phi}^* \leq \inf_{\subalign{V &\in \Pi_{n,d} \\ \lambda &\in \R}}
\left\{  \lambda \mid\,  \lambda - \Phi - \vec{f}\cdot \nabla V  \in \Sigma_{n,r(d)}
\right\},
\end{equation}
where $\Sigma_{n,q}$ denotes the set of $n$-variate SOS polynomials of degree no larger than $q$. By increasing $d$, the degree of $V$, one obtains a sequence of nonincreasing bounds on $\overline{\Phi}^*$. This is the approach followed in~\cite{Chernyshenko2014,Fantuzzi2016,Goluskin2018a,Goluskin2019}.

The SOS constraint in~\cref{e:sos-bound} enforces that $\lambda - \Phi(\vec{a}) - \vec{f}(\vec{a})\cdot \nabla V(\vec{a})$ is nonnegative everywhere on $\R^n$, not only on $\Omega$. A weaker sufficient condition for nonnegativity on $\Omega$, which also relies on SOS polynomials, can be formulated if $\Omega$ is a semialgebraic set. Precisely, assume that
\begin{equation}
\label{e:Omega-def}
\Omega = \left\{ \vec{a} \in \R^n \mid\; g_1(\vec{a})\geq 0,\, \ldots,\, g_m(\vec{a}) \geq 0 \right\}
\end{equation}
for some polynomials $g_1,\,\ldots,\,g_m$ of degree $s$ or less. For each integer $d$ such that $r(d)\geq s$, consider the set
\begin{equation}
\Lambda_d := \left\{ \sigma_0(\vec{a}) + \sum_{i=1}^m \sigma_i(\vec{a}) g_i(\vec{a}) \mid\, 
\sigma_0 \in \Sigma_{n,r(d)},\;
\sigma_1,\ldots,\,\sigma_m \in \Sigma_{n,r(d)-s}\right\}.
\end{equation}
In other words, polynomials in $\Lambda_d$ are weighted sums of SOS polynomials, where the weights are exactly the polynomials defining $\Omega$. Clearly, polynomials in $\Lambda_d$ are non-negative over $\Omega$ and $\Sigma_{n,r(d)} \subset \Lambda_d$. Since weighted SOS constraints can also be transformed into SDPs\footnote{A weighted SOS constraint $p \in \Lambda_d$ is equivalent to the $m+1$ SOS constraints $p - \sum_{i=1}^m \sigma_i g_i \in \Sigma_{n,r(d)}$ and $\sigma_1,\ldots,\sigma_m \in \Sigma_{n,r(d)-s}$. This formulation of weighted SOS constraints, also referred to as the generalized S procedure~\cite{Tan2006a,Fantuzzi2016}, is slightly suboptimal for computational purposes than the approach described in~\cite[Section 2.4]{Lasserre2015}, because it introduces more optimization variables than necessary. However, it is convenient because most software packages for SOS optimization do not natively support the method of~\cite[Section 2.4]{Lasserre2015}.}, a computationally tractable upper bound on $\overline{\Phi}^*$ that improves on~\cref{e:sos-bound} is
\begin{equation}
\label{e:wsos-problem}
\overline{\Phi}^* \leq \inf_{\subalign{V &\in \Pi_{n,d} \\ \lambda &\in \R}}
\left\{  \lambda \mid\,  \lambda - \Phi - \vec{f}\cdot \nabla V  \in \Lambda_d
\right\}.
\end{equation}

The real advantage of using weighted SOS constraints is that, if $\Omega$ satisfies an additional mild condition, then we can guarantee that the infimum on the right-hand side converges to $\overline{\Phi}^*$ as the degree $d$ of $V$ tends to infinity. This means that, at least in principle, arbitrarily sharp bounds on $\overline{\Phi}^*$ can be computed by optimizing polynomial auxiliary functions of sufficiently high degree using SOS optimization. \Cref{th:convergence-wsos} below formalizes these observations and is one of the main contributions of this paper. Its proof, which we report in detail below for completeness, is a standard argument in SOS optimization and uses a result due to Putinar~\cite[Lemma 4.1]{Putinar1993} on the existence of weighted SOS representations for strictly positive polynomials on a class of compact semialgebraic sets. For a detailed discussion of this result, known in the literature as Putinar's Positivstellensatz, we refer the reader to section 2.4 in~\cite{Lasserre2015}.
\begin{theorem}
	\label{th:convergence-wsos}
	Suppose that $\Omega$ is a compact semialgebraic set and that there exists $L$ such that $L - \|\vec{a}\|^2 \in \Lambda_d$ for some integer $d$. Then,
	\begin{equation}
	\overline{\Phi}^* = \lim_{d \to +\infty} \inf_{\subalign{V &\in \Pi_{n,d} \\ \lambda &\in \R}}
	\left\{  \lambda \mid\,  \lambda - \Phi - \vec{f}\cdot \nabla V  \in \Lambda_d
	\right\}.
	\end{equation}	
\end{theorem}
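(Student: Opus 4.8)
The plan is to establish the two inequalities separately. One direction is immediate: since every polynomial in $\Lambda_d$ is nonnegative on $\Omega$, the constraint $\lambda - \Phi - \vec{f}\cdot\nabla V \in \Lambda_d$ implies $\lambda \geq \max_{\vec{a}\in\Omega}\{\Phi(\vec{a}) + \vec{f}(\vec{a})\cdot\nabla V(\vec{a})\}$, so by~\cref{e:inf-sup-V-poly} the infimum on the right-hand side is bounded below by $\overline{\Phi}^*$ for every $d$; hence the limit (which exists because the bounds are nonincreasing in $d$ once $\Pi_{n,d}$ grows and $\Lambda_d$ grows) is at least $\overline{\Phi}^*$. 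The substance is therefore in the reverse inequality: given any $\eta > 0$, I must exhibit a degree $d$, a polynomial $V \in \Pi_{n,d}$, and a scalar $\lambda$ with $\lambda \leq \overline{\Phi}^* + \eta$ such that $\lambda - \Phi - \vec{f}\cdot\nabla V \in \Lambda_d$.

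The key steps, in order, are as follows. First, invoke~\cref{e:inf-sup-V-poly}: since the infimum over polynomial auxiliary functions equals $\overline{\Phi}^*$, there is some $\widetilde V \in \Pi_n$ with $\max_{\vec{a}\in\Omega}\{\Phi(\vec{a}) + \vec{f}(\vec{a})\cdot\nabla\widetilde V(\vec{a})\} \leq \overline{\Phi}^* + \eta/2$. Set $\widetilde\lambda := \overline{\Phi}^* + \eta$, so that the polynomial $p := \widetilde\lambda - \Phi - \vec{f}\cdot\nabla\widetilde V$ satisfies $p(\vec{a}) \geq \eta/2 > 0$ for all $\vec{a}\in\Omega$; that is, $p$ is \emph{strictly} positive on the compact semialgebraic set $\Omega$. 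Second, verify that the Archimedean hypothesis of Putinar's Positivstellensatz is met: the assumption that $L - \|\vec{a}\|^2 \in \Lambda_{d_0}$ for some $d_0$ is precisely the Archimedean condition on the quadratic module generated by $g_1,\dots,g_m$. Third, apply Putinar's Positivstellensatz~\cite[Lemma 4.1]{Putinar1993}: it yields SOS polynomials $\sigma_0,\sigma_1,\dots,\sigma_m$ with $p = \sigma_0 + \sum_{i=1}^m \sigma_i g_i$. Fourth, a bookkeeping step on degrees: let $D$ be large enough that $\deg\sigma_0 \leq r(D)$ and $\deg\sigma_i \leq r(D) - s$ for each $i$, and also that $D \geq \deg\widetilde V$ and $r(D) \geq s$; then $p \in \Lambda_D$ by definition of $\Lambda_D$, with the admissible pair $(\widetilde\lambda, \widetilde V) \in \R \times \Pi_{n,D}$. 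Hence the degree-$D$ bound is at most $\widetilde\lambda = \overline{\Phi}^* + \eta$, and since $\eta$ was arbitrary the limit is at most $\overline{\Phi}^*$. Combining with the first direction gives the claimed equality.

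The main obstacle is the degree-counting in the fourth step, specifically making sure that the degrees of the SOS multipliers $\sigma_i$ produced by Putinar's theorem are compatible with the nested definition of $\Lambda_d$, in which the multiplier $\sigma_i$ attached to $g_i$ is required to have degree at most $r(d) - s$ while $\sigma_0$ may have degree up to $r(d)$. Since Putinar's theorem gives \emph{some} representation with no a priori degree bound, one simply chooses $d = D$ large enough to accommodate whatever degrees appear; the only thing to check is that $r(d)$ grows with $d$ (which it does, being $\max\{\deg\Phi,\deg\vec{f}+d-1\}$), so that every prescribed degree budget is eventually met. A minor subtlety worth a sentence is that enlarging $d$ also enlarges the freedom in choosing $V$, but this is harmless since we only ever use the fixed $\widetilde V$, viewed as an element of the larger space $\Pi_{n,d}$; and a final remark is that monotonicity of the bounds in $d$—needed for the limit to exist—follows because $\Pi_{n,d}\subseteq\Pi_{n,d+1}$ and $\Lambda_d\subseteq\Lambda_{d+1}$ whenever $r(d+1)\geq r(d)$.
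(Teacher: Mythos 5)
Your proposal is correct and follows essentially the same route as the paper's proof: use the polynomial characterization~\cref{e:inf-sup-V-poly} to obtain an auxiliary function making $\lambda - \Phi - \vec{f}\cdot\nabla V$ strictly positive on $\Omega$, invoke Putinar's Positivstellensatz under the Archimedean assumption, and then choose $d$ large enough to absorb the degrees of the resulting multipliers. The only difference is that you spell out the easy inequality and the degree bookkeeping more explicitly, which the paper leaves implicit.
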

\begin{proof}
	We only need to show that for every $\varepsilon>0$ there exists an integer $d$ and a polynomial $V \in \Pi_{n,d}$ such that
	$\overline{\Phi}^* + \varepsilon - \Phi - \vec{f}\cdot \nabla V  \in \Lambda_d$. Equality~\cref{e:inf-sup-V-poly} guarantees the existence of an integer $d'$ and a polynomial $V \in \Pi_{n,d'}$ such that $\overline{\Phi}^* +  \varepsilon - \Phi - \vec{f}\cdot \nabla V$ is \textit{strictly} positive on $\Omega$. Then, by virtue of our assumptions on $\Omega$, Putinar's Positivstellensatz~\cite[Lemma 4.1]{Putinar1993} guarantees that $\overline{\Phi}^* + \varepsilon - \Phi - \vec{f}\cdot \nabla V$ belongs to the set of weighted SOS polynomials $\Lambda_{d''}$ for some integer $d''$. Setting $d=\max(d',d'')$ concludes the proof since $\Pi_{n,d'} \subset \Pi_{n,d}$ and $\Lambda_{d''} \subset \Lambda_d$.
\end{proof}
This result is dual to Theorem 2 in~\cite{Korda2018a} when the latter is used in the context of bounding time averages for ODEs. This relationship arises from the fact that polynomial auxiliary functions are dual to sequences of approximations to the moments of the invariant measure supported on the trajectory achieving the optimal average $\overline{\Phi}^*$; see~\cite{Lasserre2015} for more details. One could appeal to this duality and deduce \cref{th:convergence-wsos} from the analysis in~\cite{Korda2018a},
{but the direct proof we have given here is simpler and requires no knowledge of measure theory.} On the other hand, we must stress that the analysis of~\cite{Korda2018a} applies to a broad class of optimization problems over invariant measures, {which includes bounding not only infinite-time averages of ODEs, but also averages for discrete-time dynamical systems and stationary expectations of discrete- and continuous-time stochastic processes with compact state space.}

{The approach taken in our work can be used it these cases, too, if one proves that discrete-time averages and stochastic expectations admit exact characterizations in terms of auxiliary functions similar to~\eqref{e:inf-sup-V}. One way to do this is to apply convex duality to the measure-theoretic characterization of time averages and stochastic expectations given in~\cite{Korda2018a}, which is exactly the strategy followed independently by~\cite{Tobasco2018} to prove~\eqref{e:inf-sup-V}. However, alternative approaches that do not use measure theory also exist. For instance, a ``mollification" argument was used to construct auxiliary functions arbitrarily close to optimal when studying optimal control~\cite{Hernandez1996} and extreme events~\cite{Fantuzzi2019siads}. Such constructions may be easier than formulating measure-theoretic proofs when studying properties beyond time averages or stochastic expectations.}

Finally, observe that the assumption that $L - \|\vec{a}\|^2 \in \Lambda_d$ is mild and what is really needed is compactness. Indeed, any compact $\Omega$ is contained within some ball $L-\|\vec{a}\|^2 \geq 0$, so we can always ensure that $L - \|\vec{a}\|^2 \in \Lambda_d$ by adding $L-\|\vec{a}\|^2 \geq 0$ to the list of polynomial inequalities that define $\Omega$.

\subsection{Exploiting symmetry in weighted SOS constraints}
\label{ss:exploiting-symmetry}
The computational cost of solving SOS optimization problems increases quickly with problem size. As a result, solving the optimization problem on the right hand side of~\cref{e:wsos-problem} is practical only for small or medium sized problems, such that $\binom{n+d}{d}$ is $\mathcal{O}(100)$ or so. At the time of writing, the most computationally demanding part of our SOS approach to approximating extremal trajectories is the computation of near-optimal $V$ and $\lambda$ using SOS optimization. This is due to the poor scalability of general-purpose algorithms for SDPs. To reduce computational cost, one can exploit information about the boundedness and symmetry of $V$ that can be deduced a priori~\cite[Appendix~A]{Goluskin2019}. \Cref{th:wsos-multipliers} below shows that symmetry can be exploited in weighted SOS constraints, too. A proof of this theorem is also presented below, where we adapt the argument of~\cite[Proposition 1]{Goluskin2019}.
\begin{theorem}
	\label{th:wsos-multipliers}
	Let $\mathcal{T}:\mathbb{R}^n \to \mathbb{R}^n$ be an invertible linear transformation that generates a finite symmetry group, meaning that $\mathcal{T}^K$ is the identity for some integer $K$. Suppose that the system, the function $\Phi$ and the set $\Omega$ are invariant under $\mathcal{T}$, meaning that $\vec{f}(\mathcal{T}\vec{a})=\mathcal{T}\vec{f}(\vec{a})$, $\Phi(\mathcal{T}\vec{a})=\Phi(\vec{a})$ and $g_i(\mathcal{T}\vec{a})=g_i(\vec{a})$ for all $i \in \{1,\dots,m\}$. If there exist $V, \sigma_0, \sigma_1, \dots, \sigma_m$ that prove a bound $\lambda$ on $\overline{\Phi}^*$ via the sufficient condition in~\cref{e:wsos-problem}, then there exist $\widehat{V}, \widehat{\sigma}_0, \widehat{\sigma}_1, \dots, \widehat{\sigma}_m$ that are $\mathcal{T}$-- invariant and prove the same bound.
\end{theorem}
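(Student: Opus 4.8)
The plan is to symmetrize the given certificate by averaging it over the cyclic group generated by $\mathcal{T}$. Starting from a tuple $V,\sigma_0,\dots,\sigma_m$ satisfying $\lambda - \Phi - \vec{f}\cdot\nabla V = \sigma_0 + \sum_{i=1}^m \sigma_i g_i$ with $\sigma_0\in\Sigma_{n,r(d)}$ and $\sigma_1,\dots,\sigma_m\in\Sigma_{n,r(d)-s}$, I would define $\widehat{V}(\vec{a}) := \tfrac1K\sum_{k=0}^{K-1} V(\mathcal{T}^k\vec{a})$ and $\widehat{\sigma}_j(\vec{a}) := \tfrac1K\sum_{k=0}^{K-1} \sigma_j(\mathcal{T}^k\vec{a})$ for $j=0,\dots,m$. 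Since $\mathcal{T}^K$ is the identity, replacing $\vec{a}$ by $\mathcal{T}\vec{a}$ merely permutes the $K$ summands cyclically, so $\widehat{V}$ and every $\widehat{\sigma}_j$ are automatically $\mathcal{T}$-invariant; what remains is to verify that $(\widehat{V},\widehat{\sigma}_0,\dots,\widehat{\sigma}_m)$ still certifies the same bound $\lambda$ through~\cref{e:wsos-problem}.

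The key computation is that $\vec{f}\cdot\nabla V$ transforms covariantly under $\mathcal{T}$. Because $\mathcal{T}^k$ is linear, the chain rule gives $\nabla\bigl[V(\mathcal{T}^k\vec{a})\bigr] = (\mathcal{T}^k)^{\top}\nabla V(\mathcal{T}^k\vec{a})$, and iterating the equivariance hypothesis $\vec{f}(\mathcal{T}\vec{a})=\mathcal{T}\vec{f}(\vec{a})$ gives $\mathcal{T}^k\vec{f}(\vec{a})=\vec{f}(\mathcal{T}^k\vec{a})$; combining these,
\[
\vec{f}(\vec{a})\cdot\nabla\bigl[V(\mathcal{T}^k\vec{a})\bigr]
= \bigl(\mathcal{T}^k\vec{f}(\vec{a})\bigr)\cdot\nabla V(\mathcal{T}^k\vec{a})
= \bigl(\vec{f}\cdot\nabla V\bigr)(\mathcal{T}^k\vec{a}),
\]
so $\vec{f}\cdot\nabla\widehat{V} = \tfrac1K\sum_{k=0}^{K-1}(\vec{f}\cdot\nabla V)(\mathcal{T}^k\vec{a})$. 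I would then use $\Phi(\mathcal{T}^k\vec{a})=\Phi(\vec{a})$ to pull $\Phi$ inside the average, substitute the assumed decomposition evaluated at each point $\mathcal{T}^k\vec{a}$, and use $g_i(\mathcal{T}^k\vec{a})=g_i(\vec{a})$ to factor the weights back out of the sum, arriving at $\lambda - \Phi - \vec{f}\cdot\nabla\widehat{V} = \widehat{\sigma}_0 + \sum_{i=1}^m \widehat{\sigma}_i\,g_i$. The value $\lambda$ is unchanged because $\tfrac1K\sum_k\lambda=\lambda$.

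It then remains to check the membership and degree conditions. If $\sigma=\sum_\ell p_\ell^2$ is SOS, then $\sigma\circ\mathcal{T}^k = \sum_\ell (p_\ell\circ\mathcal{T}^k)^2$ is again a sum of squares and composition with the linear map $\mathcal{T}^k$ does not raise the total degree, so each $\sigma_j\circ\mathcal{T}^k$ stays in the same SOS class as $\sigma_j$; a sum of SOS polynomials scaled by the positive constant $1/K$ is SOS, hence $\widehat{\sigma}_0\in\Sigma_{n,r(d)}$, $\widehat{\sigma}_i\in\Sigma_{n,r(d)-s}$, and similarly $\widehat{V}\in\Pi_{n,d}$. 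That finishes the argument. I do not anticipate a serious obstacle: this is the averaging (Reynolds operator) argument of~\cite[Proposition~1]{Goluskin2019} transplanted to the weighted setting. The one delicate point is that the weight polynomials $g_i$ may be pulled out of the averaged sum only because they are $\mathcal{T}$-invariant---without that assumption each $\widehat{\sigma}_i$ would no longer multiply a single $g_i$---and the linearity of $\mathcal{T}$ is exactly what confines $\widehat{V}$ and the $\widehat{\sigma}_j$ to the prescribed degree classes.
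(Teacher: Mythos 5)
Your proposal is correct and follows essentially the same route as the paper's proof: symmetrize $V$ and the multipliers by averaging over the cyclic group, use linearity of $\mathcal{T}$ and the equivariance of $\vec{f}$ to show $\vec{f}\cdot\nabla\widehat{V}$ is the average of $(\vec{f}\cdot\nabla V)\circ\mathcal{T}^k$, and average the certificate identity evaluated at $\mathcal{T}^k\vec{a}$. Your explicit check that averaging preserves the SOS property and the degree classes is a detail the paper leaves implicit, but the argument is the same.
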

\begin{proof}
	Suppose that there exist $V,\sigma_0,\sigma_1,\dots,\sigma_m$ which satisfy the weighted SOS constraint in \cref{e:wsos-problem}:
	\begin{equation}
	\label{e:wsos-multipliers-proof:1}
	\lambda - \Phi\left(\vec{a}\right) - \vec{f}\left(\vec{a}\right) \cdot \nabla V\left(\vec{a}\right) = \sigma_0\left(\vec{a}\right) + \sigma_1\left(\vec{a}\right)g_1\left(\vec{a}\right) + \dots + \sigma_m\left(\vec{a}\right)g_m\left(\vec{a}\right).
	\end{equation}
	Consider now symmetrized versions of $V,\sigma_0,\sigma_1,\dots,\sigma_m$:
	\begin{subequations} \label{e:wsos-multipliers-proof:2}
		\begin{align}
		\widehat{V}\left(\vec{a}\right) &:= \frac{1}{K} \sum_{k=0}^{K-1} V(\mathcal{T}^k\vec{a}),  \\
		\widehat{\sigma}_i\left(\vec{a}\right) &:= \frac{1}{K} \sum_{k=0}^{K-1} \sigma_i(\mathcal{T}^k\vec{a}) \quad \mathrm{for} \quad i = 0,1,\dots,m.
		\end{align}
	\end{subequations}
	Note that $\widehat{V}(\mathcal{T}\vec{a}) = \widehat{V}(\vec{a})$ since $\mathcal{T}^K$ is the identity, and similarly for $\widehat{\sigma}_0, \widehat{\sigma}_1,\dots,\widehat{\sigma}_m$. The claim is proven if:
	\begin{equation}
	\label{e:wsos-multipliers-proof:3}
	\lambda - \Phi(\vec{a}) - \vec{f}(\vec{a}) \cdot \left[\frac{1}{K}\sum_{k=0}^{K-1} (\mathcal{T}^k)^T \nabla V(\mathcal{T}^k \vec{a}) \right] = \widehat{\sigma}_0(\vec{a}) + \widehat{\sigma}_1(\vec{a})g_1(\vec{a}) + \dots + \widehat{\sigma}_m(\vec{a})g_m(\vec{a}).
	\end{equation}
	To show that the above equality holds, evaluate~\cref{e:wsos-multipliers-proof:1} at $\mathcal{T}^k\vec{a}$ and use the symmetries of $\vec{f}$ and $\Phi$ to obtain the following:
	\begin{equation}
	\label{e:wsos-multipliers-proof:4}
	\lambda - \Phi({\vec{a}}) - \vec{f}(\vec{a}) \cdot [(\mathcal{T}^k)^T \nabla V(\mathcal{T}^k \vec{a} )] = \sigma_0(\mathcal{T}^k\vec{a}) + {\sigma}_1(\mathcal{T}^k\vec{a})g_1(\vec{a}) + \dots + {\sigma}_m(\mathcal{T}^k\vec{a})g_m(\vec{a}).
	\end{equation}
	Averaging both sides of \cref{e:wsos-multipliers-proof:4} for $k = 0,1,\dots,K-1$ gives \cref{e:wsos-multipliers-proof:3}, thus proving the claim.	
\end{proof}

A similar argument also holds if one wishes to prove that a set is globally absorbing using an SOS relaxation of the sufficient condition \cref{e:absorbing-set-conditions}. Specifically, if the ODE is invariant under a linear transformation $\mathcal{T}$, then there is no loss of generality in assuming that the function $W$ in an SOS relaxation of the inequality \cref{e:absorbing-set-conditions} is also $\mathcal{T}$-- invariant.

\section{Application to a model of shear flow}
\label{s:shear-flow-results}
To demonstrate the techniques described so far, we apply them to study a nine-dimensional quadratic ODE system. The system was introduced by Moehlis \textit{et al.}~\cite{Moehlis2004} as a model of sinusoidally forced shear flow in a periodic channel, with periods $L_x$ and $L_z$ in the streamwise and spanwise directions, respectively. We do not aim to analyze this model in detail, but only to showcase our new method of finding periodic orbits. The system takes the form
\begin{equation}
\label{e:nine-mode-system}
\frac{da_i}{dt} = \frac{1}{\Rey} \lambda_1 \delta_{1i} - \frac{1}{\Rey} \lambda_{i}a_i + N_{ijk} a_j a_k, \qquad i,j,k = 1,\dots,9,
\end{equation}
where summation over indices $j$ and $k$ is assumed, and $\delta_{1i}$ is the usual Kronecker delta. The state $\vec{a} = (a_1,\dots,a_9)$ represents the amplitude of physically relevant flow modes, $\Rey$ is the Reynolds number, and $\lambda_{i}$, $N_{ijk}$ are numerical coefficients. All coefficients $\lambda_{i}$ are strictly positive, so all modes are linearly damped, and $\lambda_1 \leq \lambda_i \leq \lambda_9$. The coefficients $N_{ijk}$, instead, are such that $N_{ijk}a_ia_ja_k = 0$, meaning that the quadratic terms conserve energy. Here we consider numerical values corresponding to the ``NBC" configuration in~\cite{Moehlis2004,Moehlis2005}, for which $L_x = 4\pi$ and $L_z = 2\pi$. For this as well as all other possible configurations, solutions of~\cref{e:nine-mode-system} exhibit two symmetries:
\begin{subequations}
\label{e:symmetries}
	\begin{align}
	\mathcal{T}_1 \vec{a} &= \left(a_1, a_2, a_3, -a_4, -a_5, -a_6, -a_7, -a_8, a_9\right), \\
	\mathcal{T}_2 \vec{a} &= \left(a_1, -a_2, -a_3, a_4, a_5, -a_6, -a_7, -a_8, a_9\right). 
	\end{align}
\end{subequations}
The symmetries~(\ref{e:symmetries}a,b) represent invariance of original flow field modeled by \cref{e:nine-mode-system} under translations of half a period in the streamwise and spanwise directions, respectively, and generate a four-element group $\{1, \mathcal{T}_1, \mathcal{T}_2, \mathcal{T}_1\mathcal{T}_2\}$. As noted in~\cite{Moehlis2005}, if one finds a periodic or fixed point solution to~\eqref{e:nine-mode-system}, there will be up to three other symmetry-related periodic orbits or fixed points obtained by the actions of the elements of the symmetry group.

At all values of $\Rey$, system~\cref{e:nine-mode-system} has a locally stable equilibrium point $\vec{a}_l = (1,0,\dots,0)$, which represents the laminar flow state (note that $N_{111} = 0$). For our chosen configuration, unsteady and often chaotic solutions corresponding to turbulent flows are observed for $\Rey \geq 80.54$, and a large collection of UPOs have been computed~\cite{Moehlis2005}. The laminar state is expected to be globally asymptotically stable below this value of $\Rey$, and Lyapunov functions certifying global stability were constructed using SOS optimization in~\cite{Goulart2012} for $\Rey < 54.1$. For larger $\Rey$, Chernyshenko \textit{et al.}~\cite{Chernyshenko2014} optimized lower bounds on the infinite-time average of the energy dissipation rate,
\begin{equation}
\mathcal{D} := \frac{\sum_{i = 1}^9 \lambda_{i} a_i^2}{\Rey},
\end{equation}
using polynomial auxiliary functions of degree up to 8. Here we repeat the computations using polynomials up to degree 10. We also compute bounds on the infinite-time average of the energy of perturbations from $\vec{a}_l$ (henceforth referred to as perturbation energy),
\begin{equation}
\mathcal{E} := (1 - a_1)^2 + \sum_{i=2}^9 a_i^2.
\end{equation}
In both cases, we use the auxiliary functions obtained with SOS optimization to approximate the extremal trajectories as described in \cref{s:po-approximation}. All our SOS computations were implemented using the MATLAB modeling toolbox YALMIP~\cite{Lofberg2004} and the SDP solver MOSEK~\cite{MOSEKApS2018}. We also exploited the symmetries defined by (\ref{e:symmetries}a,b) as described in \cref{ss:exploiting-symmetry} in order to reduce computational cost.

It can be shown that the unit ball centered at the origin absorbs trajectories of~\cref{e:nine-mode-system}. \Cref{th:convergence-wsos} thus guarantees the existence of near-optimal $V$ for the weighted SOS problem~\cref{e:wsos-problem}. Although the existence of near-optimal $V$ is not known to be guaranteed for the standard SOS problem~\cref{e:sos-bound}, preliminary investigations suggested that it also yields near-optimal $V$ and good orbit approximations. All numerical results reported below were therefore obtained by solving~\cref{e:sos-bound}, because it is computationally less expensive than~\cref{e:wsos-problem}.

\subsection{Bounds on the infinite-time average of energy dissipation rate}
\label{s:bounds-dissipation}
\begin{figure}[t]
	\centering
	\includegraphics[width=0.98\textwidth]{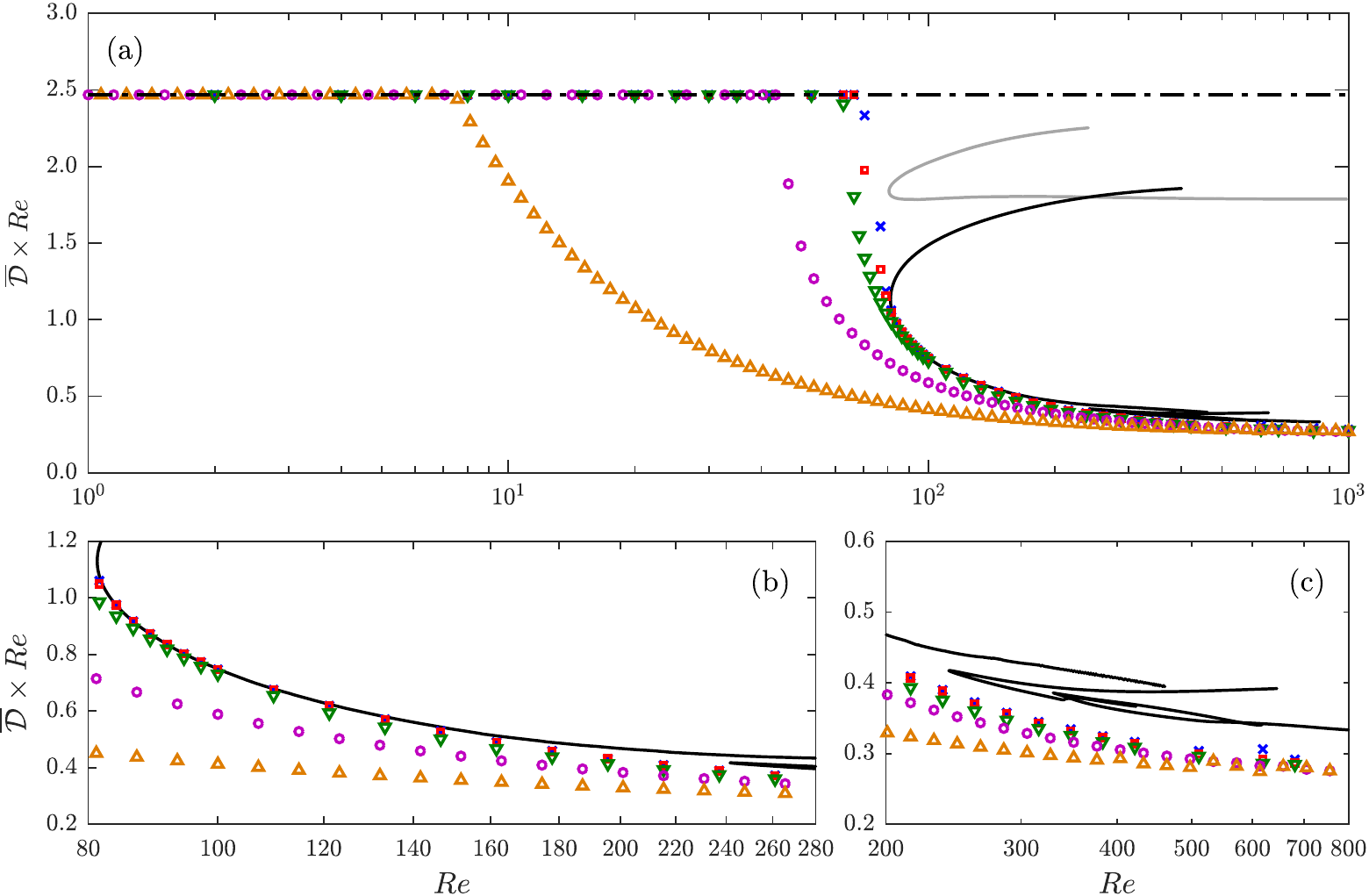}
	\caption{(a) Lower bounds on $\overline{\mathcal{D}}$ computed by optimizing over polynomial auxiliary functions of degree 2 (\mytriangle{matlaborange}), 4 (\mycircle{matlabmagenta}), 6 (\myudtriangle{matlabgreen}), 8 (\mysquare{matlabred}) and 10 (\mycross{matlabblue}). Also shown is a family of UPOs computed by Moehlis \textit{et al.}~\protect\cite{Moehlis2005} ({\color{matlabgrey}\solidrule}), and three new families of UPOs ({\color{black}\solidrule}). 
		(b,c) Detailed views of the results in panel (a). Note the gap between the degree-10 bounds and the currently available UPO data at $\Rey \gtrapprox 125$.
	}
	\label{fig:EDR_Bounds}
\end{figure}
\cref{fig:EDR_Bounds} illustrates lower bounds on $\overline{\mathcal{D}}$ computed by solving~\cref{e:sos-bound} with $\Phi = -\mathcal{D}$ for $d = 2,4,6,8$ and 10. The polynomial auxiliary functions $V$ used here are of more general ans\"atze, and some of higher degree, than those used to derive the bounds in~\cite{Chernyshenko2014}, which results in tighter bounds. Also shown in the figure are one of the families of UPOs found in~\cite{Moehlis2005}, born at $\Rey = 80.54$, and three branches discovered using the strategy outlined in \cref{s:po-approximation} (see \cref{s:upos-shear-flow} for more details). These branches, which we refer to as PO1, PO2 and PO3, are born at $\Rey = 81.24$, $\Rey = 241.5$ and $\Rey = 330.2$ respectively. To the best of our knowledge, they are new and not among those reported in~\cite{Moehlis2005}.

In the range $82 \lessapprox \Rey \lessapprox 125$, as the degree of $V$ increases, the lower bounds converge to the lower PO1 branch, which therefore represents a family of minimal orbits for $\overline{\mathcal{D}}$ for this range of Reynolds numbers. At $\Rey = 89$, for example, our numerically computed bound with degree-10 $V$ is only $0.05\%$ less than the average over the numerically computed periodic orbit. This confirms the hypothesis that the gap between the bounds on $\overline{\mathcal{D}}$ in~\cite[Figure 1]{Chernyshenko2014} and values corresponding to the laminar solution $\vec{a}_l$, as well as that between the bounds and the simulated dissipation rate of the turbulent flow, was due to the existence of a branch of UPOs that saturates the bounds (at least in the aforementioned range of $\Rey$).

From $\Rey \approx 125$, the SOS bounds begin to deviate from the lowermost branch of PO1 orbits. This can be seen clearly in panel (b) of \cref{fig:EDR_Bounds}. Increasing the degree of $V$ to 12 brings no significant improvement in the bounds (not shown in the Figure) for $\Rey > 125$, suggesting that the degree-10 $V$ used are almost optimal. We conjecture that there exists another family of UPOs with lower mean energy dissipation rate than all known UPOs, although attempts to find it have so far been unsuccessful. We will return to this issue in \cref{s:upos-shear-flow}.

Finally, for $\Rey \lessapprox 67$, the SOS bounds computed with degree-10 $V$ deviate from the values corresponding to the laminar state by less than the numerical tolerance of the SDP solver. We conjecture that the laminar solution is globally stable up to $\Rey = 80.54$---the point at which the first family of orbits is born---meaning that our SOS bounds in the range $67 \lessapprox \Rey \lessapprox 80.54$ are far from sharp and $V$ of very high degree is required to produce sharp bounds near criticality. We leave it to future work to determine possible reasons for this behavior, and whether it is generic.

\subsection{Bounds on the infinite-time average of perturbation energy}
\label{s:bounds-PE}
\begin{figure}[t]
	\centering
	\includegraphics[width=0.98\textwidth]{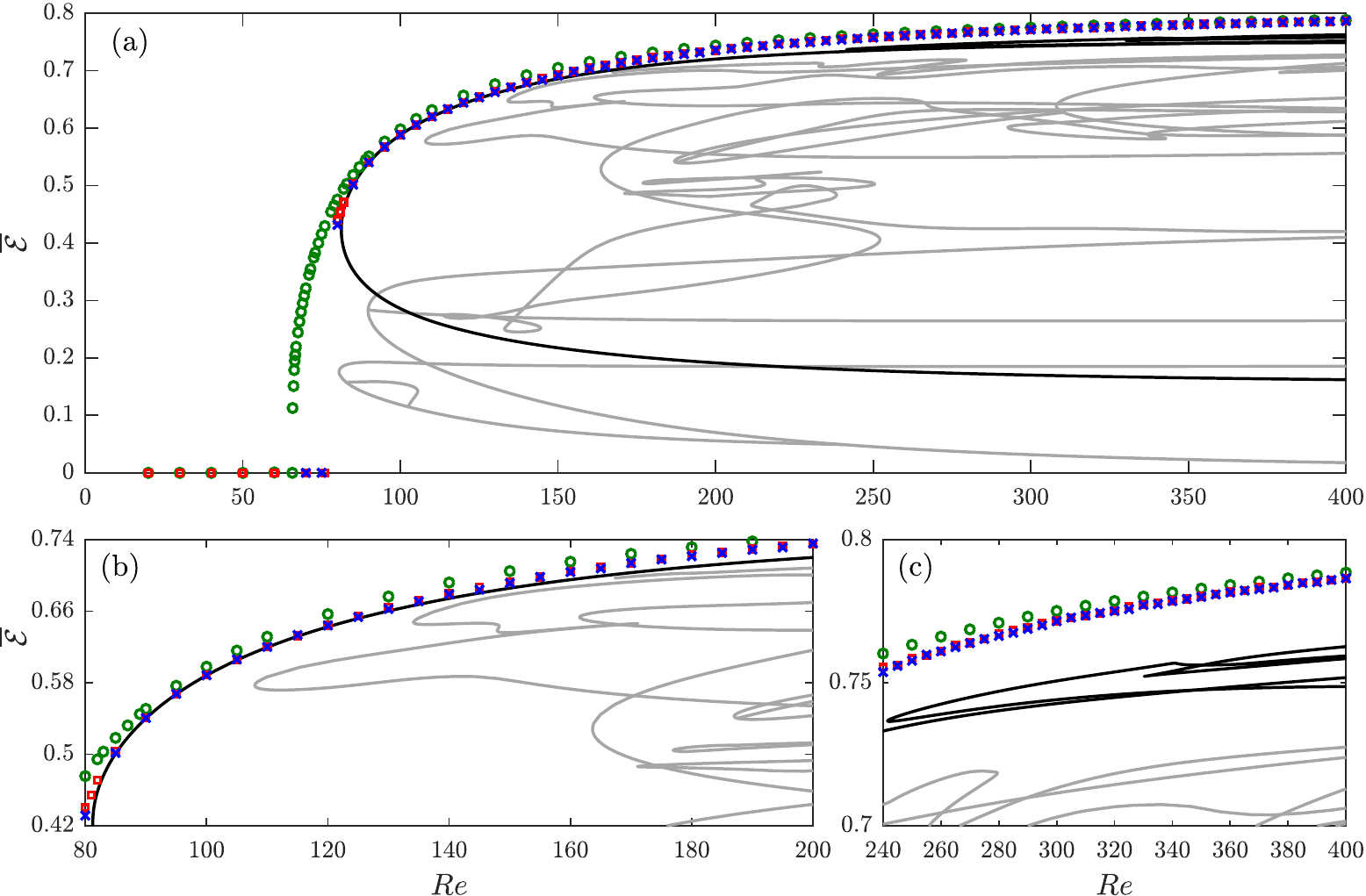}
	\caption{(a) Upper bounds on $\overline{\mathcal{E}}$ computed by optimizing over polynomial auxiliary functions of degree 6 (\mycircle{matlabgreen}), 8 (\mysquare{matlabred}) and 10 (\mycross{matlabblue}). Also shown are families of UPOs computed by Moehlis \textit{et al.}~\protect\cite{Moehlis2005} ({\color{matlabgrey}\solidrule}), and three new families of UPOs ({\color{black}\solidrule}). 
		(b,c) Detailed views of the results in panel (a).
	}
	\label{fig:PE_Bounds}
\end{figure}
\cref{fig:PE_Bounds} shows upper bounds on $\overline{\mathcal{E}}$ computed by solving~\cref{e:sos-bound} with $\Phi = \mathcal{E}$ for $d = 6, 8$ and 10. The figure also illustrates the mean perturbation energy of numerous families of UPOs discovered by Moehlis \textit{et al.}~\cite{Moehlis2005}, as well as that of the three new families PO1, PO2 and PO3. As the degree of $V$ is raised, in the range $82 \lessapprox \Rey \lessapprox 125$ the upper bounds on $\overline{\mathcal{E}}$ converge to the same branch of UPOs in the family PO1 that minimizes $\overline{\mathcal{D}}$. At $\Rey = 90$, for example, the bound computed with degree-10 $V$ is larger than the average over the UPO by only 0.08\%. This branch of orbits is simultaneously maximal for $\overline{\mathcal{E}}$ and minimal for $\overline{\mathcal{D}}$. As for the case of the mean energy dissipation rate, the SOS bounds on $\overline{\mathcal{E}}$ deviate from this branch for $\Rey \gtrapprox 125$. This can be seen clearly in panel (b) of \cref{fig:PE_Bounds}. Furthermore, very little improvement in the bound is observed as the degree of $V$ is increased beyond 10. Again, this suggests that the degree-10 $V$ used are almost optimal and that another, yet undiscovered, branch of UPOs becomes maximal for $\overline{\mathcal{E}}$. Based on the results at low $\Rey$ and the observations that solutions with lower dissipation rate have more available energy, we conjecture that this branch of UPOs will simultaneously maximize $\overline{\mathcal{E}}$ and minimize $\overline{\mathcal{D}}$. 

The SOS bounds with $V$ of degree 8 and 10 deviate from zero by less than the numerical tolerance of the SDP solver for $\Rey \lessapprox 76$. Since only the laminar solution $\vec{a}_l$ has a perturbation energy of zero, this suggests that it is globally stable in this range of $\Rey$. This statement could be made rigorous either by proving a zero upper bound on $\overline{\mathcal{E}}$ analytically, or by combining SOS computations and interval arithmetic to construct more general Lyapunov functions than those considered in~\cite{Goulart2012}. The details of such an approach are beyond the present discussion, but we refer the reader to~\cite{Goluskin2018a} for an example of solving SOS problems with interval arithmetic.

\subsection{Approximations to extremal periodic orbits}
\label{s:upos-shear-flow}
The same auxiliary functions optimized with SOS programming to bound $\overline{\mathcal{E}}$ and $\overline{\mathcal{D}}$ can be used to approximate the corresponding extremal trajectories as described in \cref{s:po-approximation}. Fixing $\Phi$ to be either $-\mathcal{D}$ or $\mathcal{E}$, problem~\cref{e:sos-bound} was solved by optimizing over polynomial auxiliary functions $V$ of the form $P_9(\vec{a}) + \| \vec{a} \|^{10}$ or $P_{10}(\vec{a})$, where $P_k(\vec{a}) \in \Pi_{9,k}$. In each case, the polynomial $\mathcal{P}_{\lambda,V}$ defined as in~\cref{e:polynomial} was built with numerically determined coefficients. We then proceeded by minimizing $\mathcal{P}_{\lambda,V}$ over $\mathbb{R}^9$ from random initial conditions uniformly distributed in the range $\Pi_{i = 1}^9 (a_i, b_i)$, where $a_i$ and $b_i$ are certain particular values in the interval $(-0.5, 0.5)$, which are not reported for brevity. The BFGS quasi-Newton algorithm~\cite{Broyden1970,Fletcher1970,Goldfarb1970,Shanno1970} was used for the minimization routine, to obtain points that lie in the set $\mathcal{S}_{\varepsilon}$ for some small $\varepsilon$. If the extremal trajectory is periodic, then it is contained in this set for a fraction of its time period no less than $1-\delta/\varepsilon$, where $\delta$ is the difference between the upper bound on $\overline{\Phi}^*$ we computed and the value of $\overline{\Phi}^*$ itself. The latter is not generally known, so one cannot compute $\delta$.\footnote{One could estimate $\delta$ from above using the difference between the bound and the average of $\Phi$ along any trajectory that is known a priori. However, this estimate is useful only if such a trajectory is nearly extremal, which is not the case in this paper.} However, the periodic orbit approximation method presented in \cref{s:po-approximation} does not require one to know this quantity.
\begin{figure}[t]
	\centering
	\includegraphics[width=0.98\textwidth]{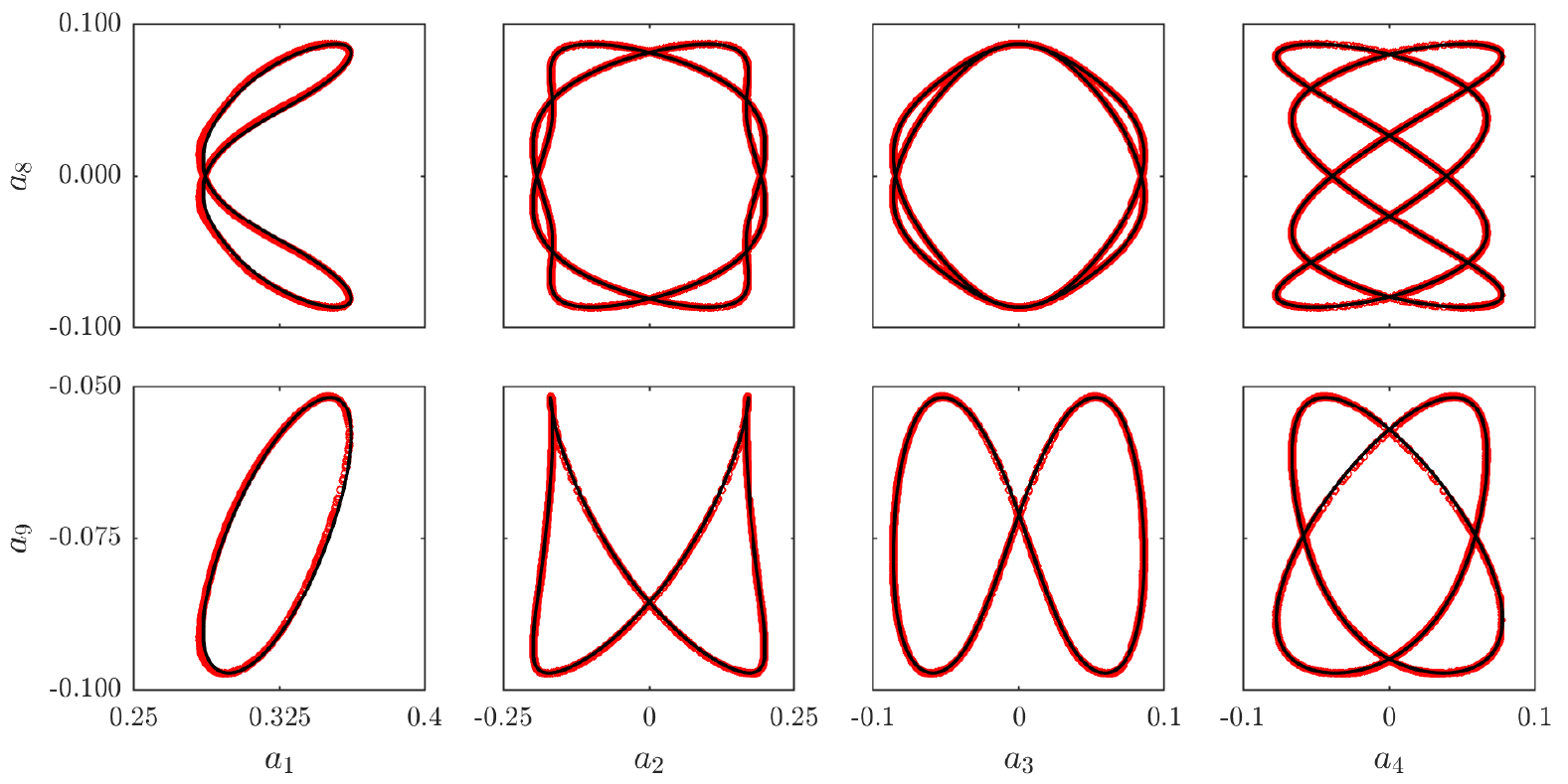}
	\caption{UPOs that maximize $\overline{\mathcal{E}}$ at $\Rey = 95$, projected onto various 2D subspaces ({\color{black}\solidrule}). Also shown are points in the set $\mathcal{S}_{2.55\mathrm{e}{-4}}$ (\mycircle{matlabred}), with the auxiliary function used of the form $P_9(\vec{a}) + \|\vec{a}\|^{10}$.} 
	\label{fig:PO_Approx95}
\end{figure}
\begin{figure}[h!]
	\centering
	\includegraphics[width=0.98\textwidth]{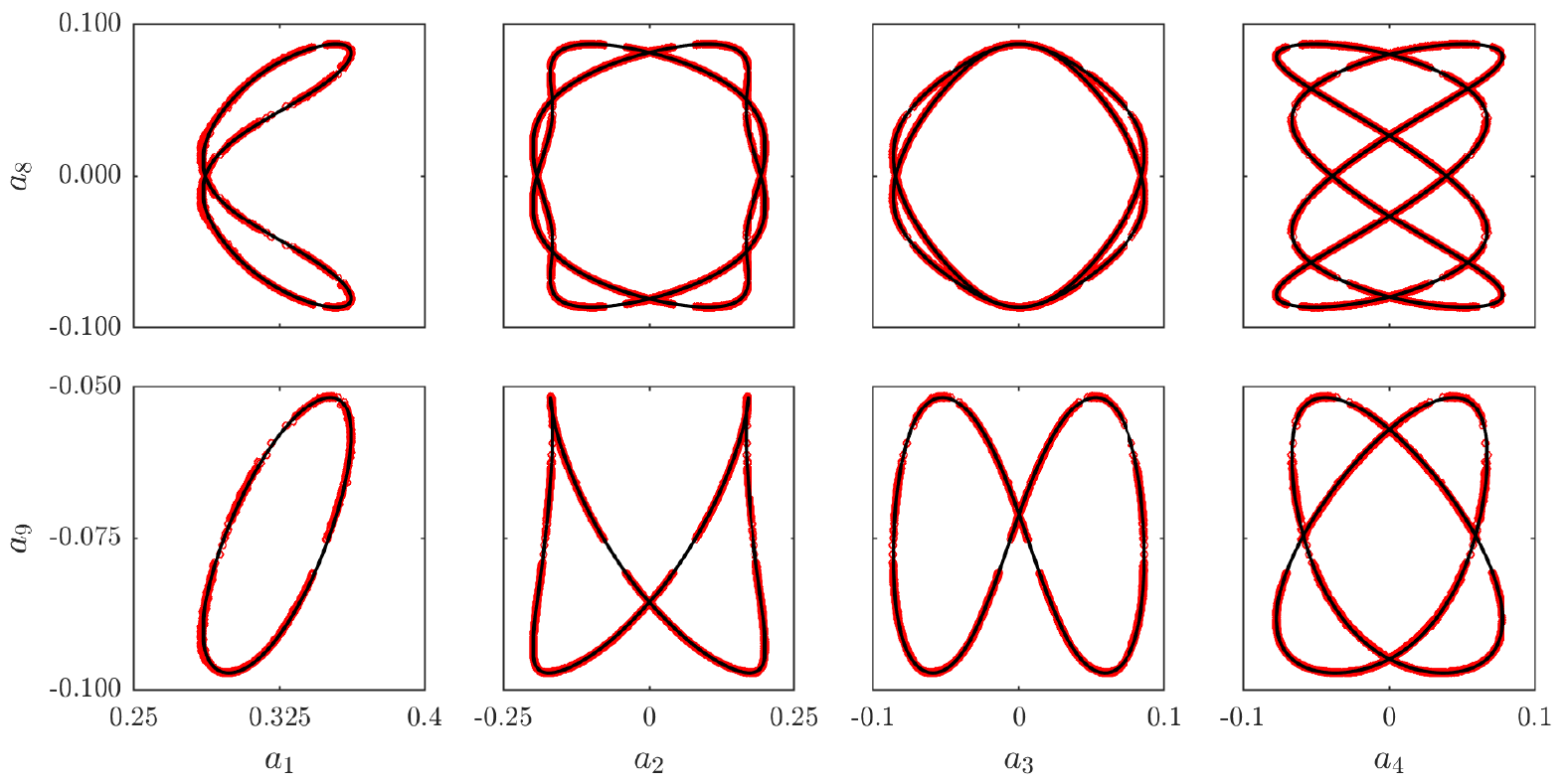}
	\caption{UPOs that minimize $\overline{\mathcal{D}}$ at $\Rey = 95$, projected onto various 2D subspaces ({\color{black}\solidrule}). Also shown are points in the set $\mathcal{S}_{1.23\mathrm{e}{-7}}$ (\mycircle{matlabred}), with the auxiliary function used of the form $P_9(\vec{a}) + \|\vec{a}\|^{10}$.} 
	\label{fig:PO_ApproxEDR95}
\end{figure}

The red dots in \cref{fig:PO_Approx95} show the SOS approximation to the trajectory maximizing $\overline{\mathcal{E}}$ at $\Rey = 95$. The results were obtained with $V = P_9(\vec{a}) + \|\vec{a}\|^{10}$ and $\varepsilon = 2.55 \times 10^{-4}$. These points resemble closed curves, suggesting that the extremal trajectory is a UPO. Indeed, using any of the points as the initial guess for a basic single-shooting Newton--Raphson algorithm~\cite[Ch. 7]{Cvitanovic2017},~\cite{Kelley2003}, we found a UPO that fits our approximation extremely well. Other UPOs at $\Rey = 95$ that are simultaneously maximal for $\overline{\mathcal{E}}$ can be obtained from symmetry considerations, as described after~(\ref{e:symmetries}b). All such UPOs are plotted as black lines in \cref{fig:PO_Approx95}. Numerical continuation of one of these symmetry-related orbits using the package \textsc{MatCont}~\cite{Dhooge2008} yields the family of UPOs so far referred to in this work as PO1. No bifurcations were detected using \textsc{MatCont} on the extremal PO1 branch (upper PO1 branch in \cref{fig:PE_Bounds}). The PO1 family is not among the UPOs reported in~\cite{Moehlis2005}, exists for $\Rey = 81.24$ and, as described in \cref{s:bounds-PE}, it maximizes $\overline{\mathcal{E}}$ up to numerical tolerance for $\Rey \lessapprox 125$.

The same UPOs shown in \cref{fig:PO_Approx95} are also plotted in \cref{fig:PO_ApproxEDR95}, this time alongside our SOS approximation of the minimal orbits for $\overline{\mathcal{D}}$. This approximation was obtained with $V=P_9(\vec{a}) + \|\vec{a}\|^{10}$ and $\varepsilon = 1.23 \times 10^{-7}$; a different value for $\varepsilon$ is used because near-optimal bounds on $\overline{\mathcal{D}}$ require a different $V$ than near-optimal bounds on $\overline{\mathcal{E}}$. It is clear that the same branch of PO1 orbits simultaneously minimizes the mean dissipation rate and maximizes the mean perturbation energy. Although the approximation obtained when bounding the former is slightly worse, our Newton--Raphson solver converged robustly to the same UPO using initial guesses from either approximation.

Surprisingly, for both choices of $\Phi$ specifying a more general ansatz for $V$ results in the same bound on $\overline{\Phi}^*$ (up to small differences due to working in finite precision), but a worse approximation of the extremal periodic orbits. \Cref{fig:PO_Approx95_P10} shows our SOS approximation of the orbits maximizing $\overline{\mathcal{E}}$, obtained with a generic degree-10 $V$ and $\varepsilon = 2.55 \times 10^{-4}$. The approximation is qualitatively correct but there is a marked offset of $\mathcal{O}(0.01)$ in the $a_1$ direction. Equally worse approximations are obtained with generic degree-10 $V$ when approximating the orbits minimizing $\overline{\mathcal{D}}$ (not shown for brevity). One possible explanation for this behavior is that multiple choices of $V$ yielding the same optimal or near-optimal bound on $\overline{\Phi}^*$ exist, but not all provide a good approximation to the extremal trajectories. Since the SOS problems~\cref{e:sos-bound} and~\cref{e:wsos-problem} for $V$ only optimize the bound on $\overline{\Phi}^*$, but do not take the quality of the extremal trajectory approximation into account, the numerical algorithm used to solve the SOS problem may converge to a $V$ that gives the same near-optimal bound on $\overline{\Phi}^*$ as many others (within reasonable numerical tolerances), but a poorer approximation of the extremal trajectory.
Another factor is that the optimization problem~\cref{e:sos-bound} becomes increasingly ill-conditioned as the degree of $V$ and the number of optimization variables increases. This is a common problem for large SOS programs~\cite{Lofberg2009}. Restricting the form of $V$ used in our computation counteracts both these facts, and we believe that this is why it results in a slightly better orbit approximation. A more systematic investigation, both theoretical and computational, is left for future work.
\begin{figure}
	\centering
	\includegraphics[width=0.98\textwidth]{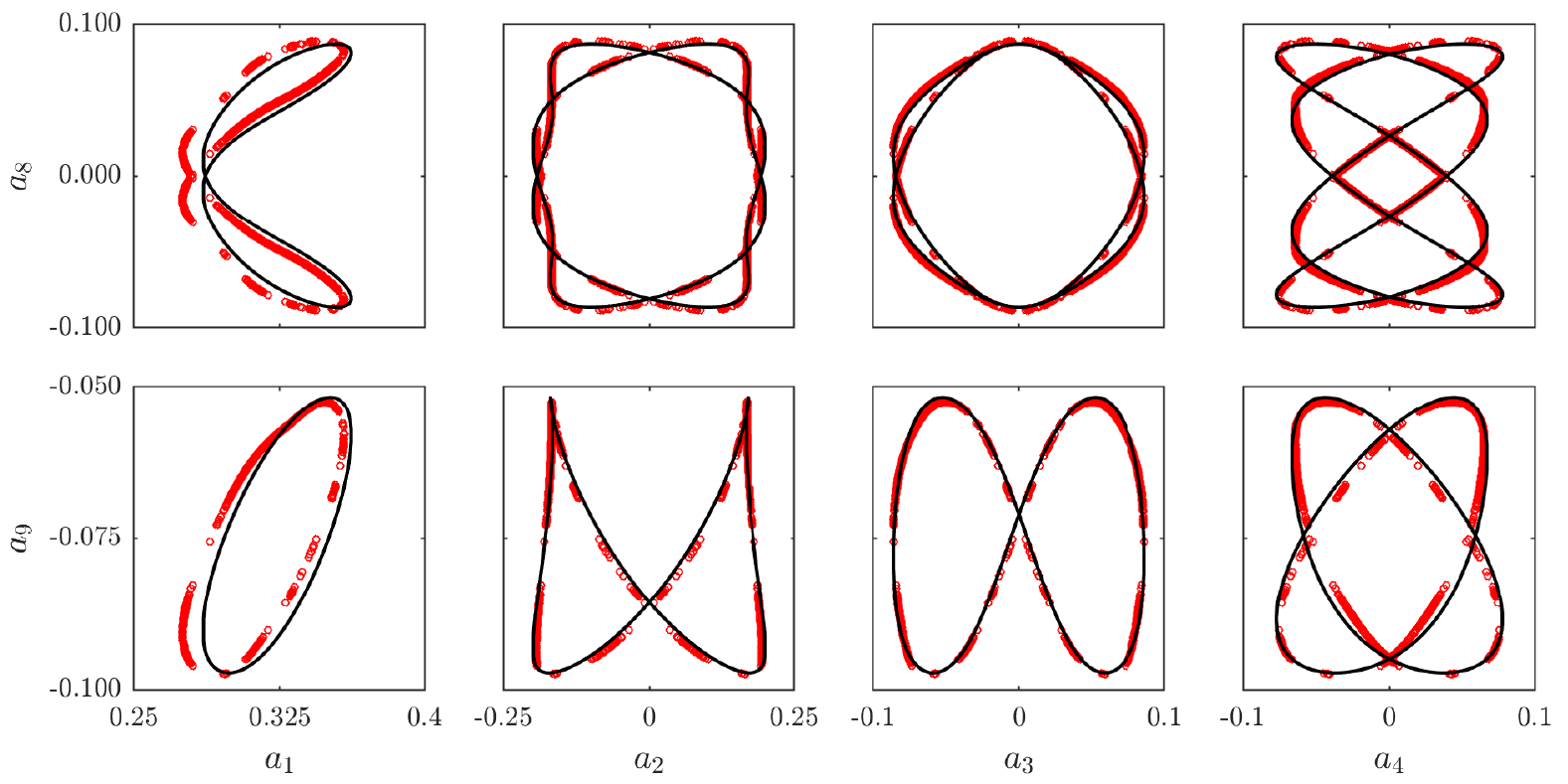}
	\caption{UPOs that maximize $\overline{\mathcal{E}}$ at $\Rey = 95$, projected onto various 2D subspaces ({\color{black}\solidrule}). Also shown are points in the set $\mathcal{S}_{2.55\mathrm{e}{-4}}$ (\mycircle{matlabred}), with the auxiliary function used of the form $P_{10}(\vec{a})$.} 
	\label{fig:PO_Approx95_P10}
\end{figure}

An identical approximation of the maximal orbits for $\overline{\mathcal{E}}$ was performed at $\Rey = 105$. At this higher value of $\Rey$, the SOS bounds on $\overline{\mathcal{E}}$ and $\overline{\mathcal{D}}$ still agree very well with the averages over the most extremal UPOs of the PO1 family. \Cref{fig:PO_Approx105} shows these orbits, together with the SOS approximations obtained after optimizing $V = P_9(\vec{a}) + \|\vec{a}\|^{10}$ to yield near-optimal bounds on $\overline{\mathcal{E}}$. The approximation points were computed with $\varepsilon = 3.93 \times 10^{-6}$. The results are just as convincing as at $\Rey = 95$, and equally good approximations are obtained when maximizing lower bounds on $\overline{\mathcal{D}}$.

Finally, we repeated the analysis at $\Rey = 250$, for which there remains a gap between the available UPO data and our bounds on both $\overline{\mathcal{E}}$ and $\overline{\mathcal{D}}$. \Cref{fig:PO_Approx250} compares the most extremal known UPOs at $\Rey = 250$ to our SOS approximations of the extremal orbits, which consist of points in the set $\mathcal{S}_{4.71\mathrm{e}{-6}}$ obtained after optimizing upper bounds on $\overline{\mathcal{E}}$ with $V=P_9(\vec{a}) + \|\vec{a}\|^{10}$. The two do not match and the SOS approximations do not look like closed orbits, although they do resemble sections of UPOs. Even worse results were obtained when maximizing lower bounds on $\overline{\mathcal{D}}$. It should be stressed once again that the approximation procedure of \cref{s:po-approximation} is not guaranteed to localize the entire extremal UPO, but only regions of state space where it spends a large fraction of its time period, provided that $V$ is very close to optimal. The small difference in the optimal bound on $\overline{\mathcal{E}}$ obtained with degree-8 and degree-10 $V$ (cf. \cref{fig:PE_Bounds}) suggests that the latter is close to optimal, and we conjecture that an extremal UPO at $\Rey = 250$ indeed passes near the approximating points shown in \cref{fig:PO_Approx250}. We also expect that $V$ of higher degree would produce a better approximation, but we could not test this due to both the increase in required computational resources and the aforementioned degradation in numerical conditioning.

Unfortunately, the approximation points in \cref{fig:PO_Approx250} did not provide sufficiently good initial conditions to converge to an extremal UPO at $\Rey = 250$ with our single-shooting Newton--Raphson algorithm. This is due in part to our degree-10 $V$ not being sufficiently close to optimal, but also due to the UPOs of the system becoming more and more unstable as the Reynolds number is raised. We believe that the latter issue is particularly relevant, and that it could be resolved with multiple-shooting or other, more sophisticated methods for converging UPOs, which perhaps take into consideration the characteristics of the set $\mathcal{S}_{\varepsilon}$.

For some initial points, however, our basic single-shooting Newton--Raphson algorithm did converge to a UPO not reported in~\cite{Moehlis2005}, which is plotted in \cref{fig:PO_Approx250} together with three others obtained from symmetry considerations. This UPO was then numerically continued using \textsc{MatCont} to produce the family previously referred to as PO2. The orbits shown in \cref{fig:PO_Approx250} belong to the upper branch of this family in \cref{fig:PE_Bounds}. A third family of UPOs, earlier referred to as PO3, bifurcates from this outer branch.
\begin{figure}
	\centering
	\includegraphics[width=0.98\textwidth]{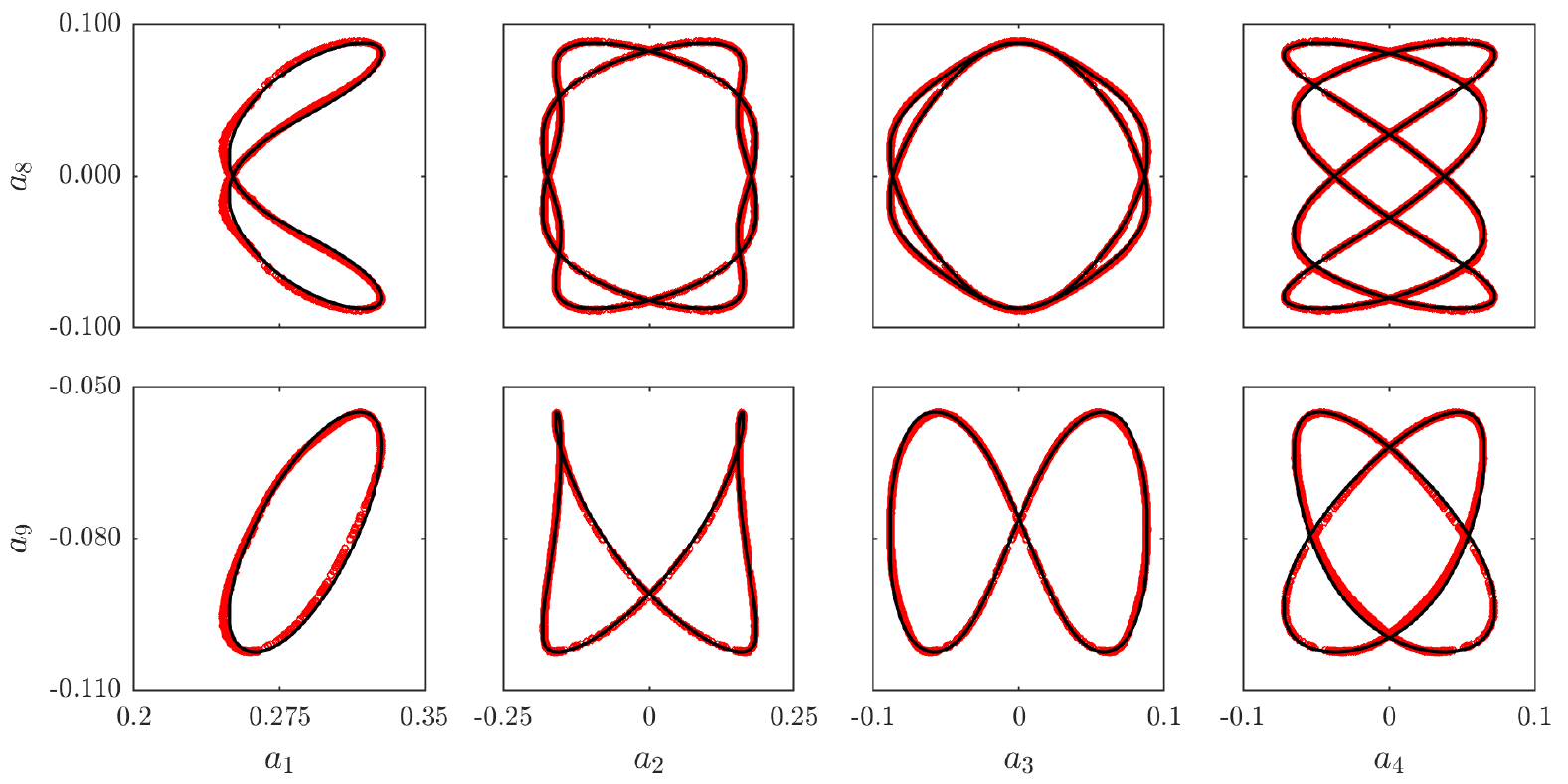}
	\caption{UPOs that maximize $\overline{\mathcal{E}}$ at $\Rey = 105$, projected onto various 2D subspaces ({\color{black}\solidrule}). Also shown are points in the set $\mathcal{S}_{3.93\mathrm{e}{-6}}$ (\mycircle{matlabred}), with the auxiliary function used of the form $P_{9}(\vec{a}) + \|\vec{a}\|^{10}$.}
	\label{fig:PO_Approx105}
\end{figure}
\begin{figure}
	\centering
	\includegraphics[width=0.98\textwidth]{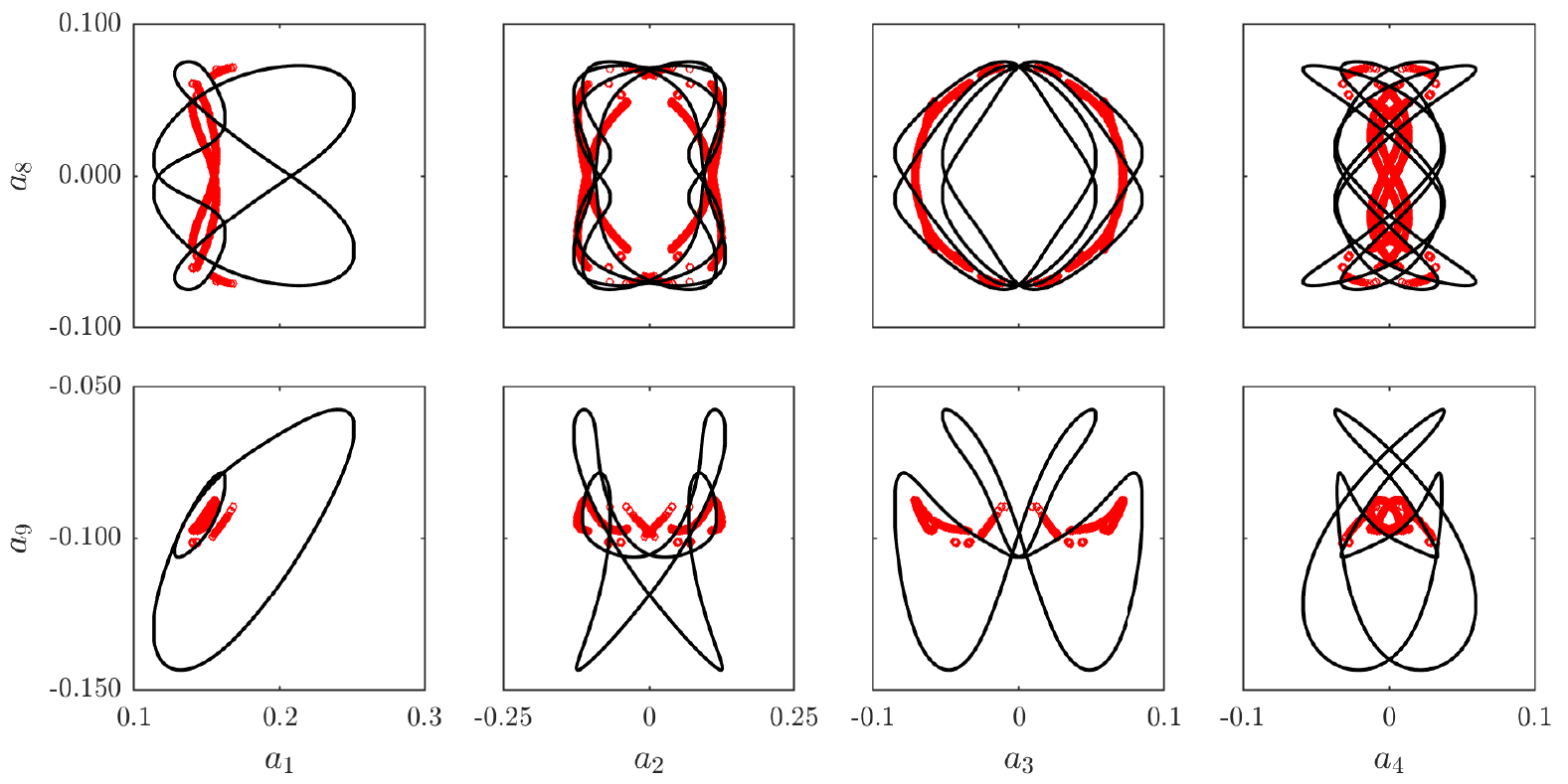}
	\caption{Most extremal known UPOs for $\overline{\mathcal{E}}$ at $\Rey = 250$, projected onto various 2D subspaces ({\color{black}\solidrule}). Also shown are points in the set $\mathcal{S}_{4.71\mathrm{e}{-6}}$ (\mycircle{matlabred}), with the auxiliary function used of the form $P_{9}(\vec{a}) + \|\vec{a}\|^{10}$.} 
	\label{fig:PO_Approx250}
\end{figure}

\section{Conclusions}
\label{s:conclusion}
In this paper we discussed the numerical implementation of an approach to approximate trajectories that maximize or minimize infinite-time averages of ODE systems governed by polynomial dynamics. The approach, originally proposed in~\cite{Tobasco2018}, relies on the construction of an auxiliary function $V$ that produces a near-sharp bound $\lambda$ on the infinite-time average of a quantity $\Phi$, followed by the computation of sublevel sets $\mathcal{S}_\varepsilon$ of the function $\lambda - \Phi - \vec{f}\cdot \nabla V$. Here, we have proved that arbitrarily optimal polynomial $V$, which yield arbitrarily sharp bounds on the extremal infinite-time average of $\Phi$, can always be constructed computationally when the optimization problem~\cref{e:inf-sup-V-poly} is implemented using weighted SOS constraints. Since such constraints can be reformulated as SDPs, the first step of the approximation procedure can always be carried out efficiently (precisely, in polynomial time) given enough computational resources. We have also proposed a simple and highly parallelizable scheme based on direct nonlinear minimization of $\lambda - \Phi - \vec{f}\cdot \nabla V$ to compute points in the set $\mathcal{S}_\varepsilon$ for high-dimensional systems, for which finding the whole set $\mathcal{S}_\varepsilon$ is impossible in practice. These points often approximate the extremal trajectories well, and provide good initial conditions to algorithms for converging UPOs.

The potential of these methods was demonstrated by applying them to a nine-dimensional model of shear flow~\cite{Moehlis2004}. We have calculated upper bounds on the mean energy of perturbations from the laminar state and lower bounds on the mean energy dissipation rate across a range of $\Rey$, and produced approximations to the corresponding extremal orbits. This has resulted in the discovery of three families of UPOs, born respectively at $\Rey = 81.24$, $\Rey = 241.5$ and $\Rey = 330.2$. These families are, to the best of our knowledge, new and not among those previously reported in~\cite{Moehlis2005}. The first family of UPOs is simultaneously minimal for $\overline{\mathcal{D}}$ and maximal for $\overline{\mathcal{E}}$ when $81.24 \lessapprox \Rey \lessapprox 125$, but cease to be extremal for both quantities at higher $\Rey$. The other two families of UPOs appear not to be extremal at any $\Rey$. Since increasing the degree of the auxiliary function $V$ brings very little improvement in our bounds for both $\overline{\mathcal{D}}$ and $\overline{\mathcal{E}}$ when $\Rey \gtrapprox 125$, we conjecture the existence of another family of UPOs that simultaneously optimizes both quantities and attains values that saturate the corresponding bounds. Finding these extremal orbits remains a topic for future work, although \cref{fig:PO_Approx250} gives an idea of the regions of state space in which they lie.

While we have focused on finding UPOs that optimize $\overline{\mathcal{E}}$ and $\overline{\mathcal{D}}$, other orbits can be discovered by varying the choice of $\Phi$. Indeed, any particular UPO can in principle be found by letting $\Phi$ be an indicator function, equal to unity on the UPO of interest and zero elsewhere, or a smooth polynomial approximation thereof. Of course, this cannot be done in practice unless a priori information on the location of a certain orbit is available. Nevertheless, varying $\Phi$ enables one to systematically search for periodic orbits guided by the SOS approximations. This much is true not only for the shear flow model considered here, but also for many other ODE systems. In addition, the methods can be extended to discrete-time and stochastic dynamics; see~\cite{Korda2018a} for a discussion of this from the perspective of invariant measures.

In principle, the techniques discussed in this work are applicable to polynomial ODEs of arbitrarily high dimension. At the time of writing, however, constructing near-optimal auxiliary functions using off-the-shelf software becomes prohibitively expensive for ODEs with more than $\mathcal{O}(10)$ states, because the computational resources required by standard interior-point SDP solvers grow quickly as the degree of $V$ is raised, and the numerical conditioning degrades. To help alleviate these problems, \cref{th:wsos-multipliers} provides a method to reduce the number of decision variables in the related SDPs by invoking symmetries. Alternatives to computationally intensive general-purpose interior-point SDP solvers for optimization with SOS constraints have been proposed recently~\cite{Papp2019,Zheng2017,Zheng2019a} and further gains are possible if (scaled) diagonally dominant SOS constraints are used~\cite{Ahmadi2019}. These are stronger than standard SOS conditions but can be implemented as linear or second-order cone programs instead of SDPs. We expect that the computational barrier to studying high-dimensional systems using SOS optimization will be removed in the near future using a combination of any of these very promising techniques.

Finally, we remark that the algorithm we used in this work to numerically converge UPOs is a simple single-shooting Newton--Raphson method. Our SOS approximations to UPOs can be made even more useful if one employs more sophisticated techniques, which extract more information from the approximation than a single point used as the initial guess. One option is to initialize a multiple-shooting Newton--Raphson method~\cite{Christiansen2017} using a selection of the approximation points obtained from the SOS computation. Another interesting possibility is to leverage a variational method for finding UPOs introduced by Lan \& Cvitanovi\'c~\cite{Lan2004} and further developed by Boghosian \textit{et al.}~\cite{Boghosian2011a}. This method finds a UPO by evolving an initial loop which approximates it. Connecting by smooth curves the approximation points obtained with the methods discussed in this work would generate such an initial loop, although it is not immediately clear how this connection should be performed. Whether the SOS approximation method presented here can successfully be utilized in conjunction with multiple-shooting or the variational approach of~\cite{Lan2004,Boghosian2011a} remains to be seen and should be investigated by future work. In principle, such a combined approach has the potential to become a powerful new method to tackle the long standing problem of searching for UPOs in differential dynamical systems.

\subsection*{Acknowledgment}
The authors would like to thank Jeff Moehlis for providing data used to produce \cref{fig:PE_Bounds}. An anonymous referee's suggestion on how one could generate random points with a favorable probability distribution constitutes part of the discussion in \cref{s:po-approximation}, and is gratefully acknowledged.

\bibliography{SIADS-references-v3}
\end{document}